\newcommand{\bT}{{\mathbb T}} 
\newcommand{\bP}{{\mathbb P}}
\newcommand{\bZ}{{\mathbb Z}}
\newcommand{\bR}{{\mathbb R}}
\newcommand{\bC}{{\mathbb C}}
\newcommand{\bQ}{{\mathbb Q}}
\newcommand{\bG}{{\mathbb G}}
\newtheorem{thm}{Theorem}[section]
\newtheorem{lemma}[thm]{Lemma}
\newtheorem{cor}[thm]{Corollary}
\numberwithin{equation}{section}
\begin{document}

\title{Etale cohomologies of quadrics over $\bR$}
 
\author{Nobuaki Yagita}

\address{ faculty of Education, 
Ibaraki University,
Mito, Ibaraki, Japan}
 
\email{ nobuaki.yagita.math@vc.ibaraki.ac.jp, }

\keywords{algebraic elements, etale cohomology,
 quadrics}
\subjclass[2010]{ 55N20, 14C15, 20G10}

\begin{abstract}
In this paper, we study etale cohomologies of quadrics over $\bR$.
An element in the etale cohomology is  called algebraic, if it is in the image of the cycle map
from the Chow ring. 
In this paper, we compute the etale cohomology of norm quadrics, and  give examples which have
 many non-algebraic elements. 
\end{abstract}

\maketitle

\section{Introduction}
Let $X$ be a smooth algebraic variety over a field $k$.
We consider the cycle map
 from the Chow ring to the etale cohomoly (of $2$-adic integer  $\bZ_2$ coefficients)
\[cl : CH^*(X)\to H^{2*}_{et}(X;\bZ_2(*)).\]
We say that $x\in H^{2*}(X;\bZ_2(*))$ is algebraic if $x\in Im(cl)$.  (See 
$\S 3$ below  for  the definition of $H_{et}^{2*}(X;\bZ_2(*))$ 
in this paper.)

The above cycle maps and algebraic properties
are studied for examples in  [Be],[Sc-Su].
In particular Benoist shows many examples of existence of
non-algebraic elements in various situations. 
We give another examples of quadrics over $\bR$ with many non-algebraic elements,
which seem to relate some question in 4.5 in [Be].

\begin{thm} Let   $Q^d$ be the anisotropic quadric of dimension  $d$ over $\bR$.  
Then the  cohomology 
$H_{et}^{2*}(Q^d;Z_2(*))$ 
has  a non-algebraic element
if and only if $d\ge 2^3-1=7$.

\end{thm}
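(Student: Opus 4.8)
The plan is to reduce the statement to a computation in mod-$2$ cohomology together with the action of the Milnor primitive $Q_2$, whose topological degree is exactly $2^3-1=7$. First I would compute $H^{2*}_{et}(Q^d;\bZ_2(*))$ explicitly. Over $\bR$ an anisotropic form of dimension $d+2$ is $(d+2)\langle 1\rangle$, so $Q^d$ is a neighbour of a norm (Pfister) quadric and the computation of norm quadrics carried out in the paper applies. Concretely, since $Q^d(\bR)=\emptyset$ complex conjugation acts freely on the complex points $Q^d(\bC)$ of the split quadric, so the étale cohomology of $Q^d$ is the $\bZ/2$-equivariant (Borel) cohomology of $Q^d(\bC)$, i.e. the cohomology of the quotient manifold. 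I would organize this through the Hochschild--Serre spectral sequence for $\mathrm{Gal}(\bC/\bR)=\bZ/2$, whose $E_2$-page is $H^p(\bZ/2;H^q(Q^d(\bC);\bZ/2))$, built from the split-quadric classes and the polynomial ring $H^*_{et}(\bR;\bZ/2)=\bF_2[\rho]$, $\deg\rho=1$. Because there are no real points the $\rho$-towers are finite, the top surviving power being $\rho^{2d}$, which pins down the module structure.

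Next I would identify $\Img(cl)$. The cycle map factors through algebraic cobordism, $\Om(Q^d)\to CH^*(Q^d)\to H^{2*}_{et}(Q^d;\bZ_2(*))$, and the algebraic classes are generated by powers of the hyperplane section and by linear-subspace classes; since $Q^d$ is anisotropic these subspaces are defined only after an even-degree extension, which introduces factors of $2$ and restricts the image to an explicit subgroup in each degree. The obstruction is governed by the truncated theory $\on$ for $n=3$, which records the primitives $Q_0,Q_1,Q_2$. Comparing the algebraic subgroup against the cohomology of the first step isolates the candidate non-algebraic generators, all of which sit in even degree and involve the arithmetic powers $\rho^k$.

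The decisive input is a non-algebraicity criterion: the reduction modulo $2$ of any element of $\Img(cl)$ lies in $\Ker(Q_i)$ for every $i\ge 1$. This is the standard consequence of the bidegree of the motivic primitive $Q_i$, which carries the Chow line $H^{2*,*}$ off itself, so that the étale realization of an algebraic class is annihilated by $Q_i$. It therefore suffices to exhibit an even-degree class $x$ with $Q_2(\bar x)\ne 0$. Here $Q_2$ acts on the arithmetic part by $\rho\mapsto\rho^8$ and, as a derivation, mixes the geometric generators of the split quadric with $\rho$-powers; the resulting value first becomes nonzero when its target, a class built from $\rho^{14}$ in degree $2\cdot 7=14$, survives, which requires the $\rho$-tower to reach top degree $2d\ge 14$, i.e. $d\ge 7$. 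For such $d$ the class carrying this nonzero $Q_2$ is non-algebraic, giving the \emph{if} direction. For $d\le 6$ one has $2d\le 12<14$, so only $Q_0$ (the integral Bockstein, which preserves algebraicity) and $Q_1$ (degree $3$) act nontrivially; here I would check directly, generator by generator, that every even class is hit by an explicit cycle --- powers of $h$, the two middle linear-subspace classes when $d$ is even, and their products with the surviving $\rho$-powers --- so that $cl$ is onto and no non-algebraic element exists.

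The main obstacle is making the detection step rigorous, and it has two parts. First, the equivariant action of $Q_2$ on the lifted geometric classes is not determined by the split quadric alone (there $Q_2$ vanishes for degree reasons, since the cohomology is concentrated in even degrees), but only after passing to the quotient, where it acquires $\rho$-corrections; computing these corrections, locating the exact even-degree class with nonzero $Q_2$, and confirming that the threshold is $d=7$ rather than something smaller is the crux. Second, one must control the passage from the mod-$2$ statement to the integral coefficients $\bZ_2(*)$, resolving the torsion and the extension problems in the Hochschild--Serre spectral sequence so that the class is genuinely outside $\Img(cl)$ and not an artifact of the mod-$2$ reduction.
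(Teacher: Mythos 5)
Your overall architecture (reduce to Pfister neighbours, compute the target group, compare with the image of $cl$) parallels the paper, but your decisive detection step contains a genuine gap: there is no even-degree class with $Q_2(\bar x)\neq 0$ to exhibit. On the relevant summand (the Rost motive $M_n$) the mod-$2$ \'etale cohomology is $\bZ/2[\rho]/(\rho^{2^{n+1}-1})$, with restriction to $\bar M$ equal to $\bZ/2$; every candidate class is, modulo known algebraic classes, a pure power $\rho^{4m}$. Since $Q_2$ is a derivation (and in the Borel-equivariant picture you invoke it is strictly primitive, with $Q_2(\rho)=\rho^8$ as you say), one gets $Q_2(\rho^{2k})=2k\,\rho^{2k+7}=0$: the operation annihilates \emph{all} even powers of $\rho$, so your criterion ``algebraic $\Rightarrow$ killed by $Q_i$'' is vacuously satisfied by everything in sight and detects nothing. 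The numerology $2^3-1=7$ is indeed tied to $Q_2$, but the relation runs in the opposite direction from what you propose: in the paper's computation the \emph{algebraic} torsion classes are exactly $cl(c_i)=\rho^{2^{n+1}-2^{i+1}}$ (Rost), i.e.\ those even powers that are $Q$-\emph{images} of odd powers (e.g.\ $\rho^8=Q_2(\rho)$), while the non-algebraic class $\bar\rho_4$ is the even power that fails to be such an image once $n\ge 3$. The paper never proves non-algebraicity by an operation computation; it compares two complete computations: Rost's $CH^*(M_n)\cong \bZ\{1,c_0\}\oplus\bZ/2\{c_1,\dots,c_{n-1}\}$ with $cl(c_i)=\rho^{2^{n+1}-2^{i+1}}$, against its own determination $H^{2*}_{et}(M_n;\bZ_2(*))\cong \bZ_2\{1,\pi\}\oplus\bZ/2\{\bar\rho_4,\dots,\bar\rho_{2^{n+1}-4}\}$ (Theorem 6.1). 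Your sketch of $\Img(cl)$ via cobordism and linear subspaces gestures at this but supplies none of the needed precision.

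The remaining steps also omit exactly where the paper does real work. The passage to integral coefficients is handled not by resolving Hochschild--Serre extension problems (which you flag but do not resolve) but by $\bZ/2^s$-coefficient long exact sequences and an inverse limit, the key point being Lemma 5.1: the classes $a_s\in H^{4m+2,4m+3}$ do not survive the limit, which eliminates the degrees $c\equiv 2 \bmod 4$. That twist bookkeeping in $\bZ_2(*)$ --- only $\bar\rho_{4m}$ and $\pi\in H^{2^{n+1}-2}_{et}(M_n;\bZ_2(1))$ occur --- is precisely what makes $\bar\rho_4$ algebraic in $M_2$ and puts the threshold at $d=7$ rather than lower; your write-up never tracks the twist. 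Finally, for general $d$ the paper uses the excellence decomposition $M(Q^d)\cong \oplus_i M_{n_i}\otimes M(\bP^{m_i-1})\otimes \bT^{\otimes s_i}$ with $d+2=2^{n_0+1}-2^{n_1+1}+\cdots$, so a summand $M_n$ with $n\ge 3$ appears iff $d\ge 7$; the case $d\le 6$ then reduces to $M_{n_i}$, $n_i\le 2$, where all classes are algebraic (done explicitly for $Q^3,Q^5,Q^6$ in Lemma 8.1). Your proposed ``generator by generator'' verification for $d\le 6$ has no comparable foundation without this decomposition, and your ``if'' direction for non-norm quadrics ($d\neq 2^n-1$) is not actually reduced to the norm-quadric computation without it either.
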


\begin{thm} Let   $Q^d$ be the anisotropic quadric of dimension  $d=2^n-1$ (i.e. the norm variety).  
Then, for each $c =0\ mod(4)$ with $4\le c \le 2^{n+1}-12$, the cohomology 
$H_{et}^{c}(Q^d;Z_2)$ 
has a non-algebraic element.
However,  the cohomology $H_{et}^{2*}(Q^d;Z_2(*))/(2-torsion)$
does not have a non-algebraic element. 
\end{thm}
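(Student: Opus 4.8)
\medskip
\noindent\textbf{Sketch of the intended argument.}
The plan is to read off both the étale cohomology and the image of the cycle map from the (mod $2$) motivic cohomology of $Q^d$ together with the Beilinson--Lichtenbaum comparison. Over $\bC$ the quadric splits and is cellular, so $CH^*(Q^d_{\bC})\cong H^{2*}(Q^d(\bC);\bZ_2)$ is free of rank $d+1=2^n$, generated by the powers $h^i$ of the hyperplane class for $0\le i\le m:=2^{n-1}-1$ and by the classes $\ell h^{\,j}$ of a maximal linear subspace in the top half, with $h^{m+1}=2\ell$; everything is algebraic over $\bC$. Over $\bR$ I would use that $\langle 1\rangle^{\oplus(2^n+1)}$ is a Pfister neighbour of the $(n+1)$-fold Pfister form, so the reduced motive of $Q^d$ contains the Rost motive $\mathcal M$ of the symbol $\rho^{\,n+1}\in K^M_{n+1}(\bR)/2$, whose mod-$2$ motivic cohomology is known as a module over $H^{*,*}_M(\bR;\bZ/2)=\bZ/2[\tau,\rho]$ with $\deg\tau=(0,1)$, $\deg\rho=(1,1)$. Étale cohomology is obtained by inverting $\tau$, and the algebraic classes are exactly the image of $CH^*=H^{2*,*}_M$, i.e. the classes whose weight equals half their degree; a class of $H^{2i}_{et}$ is non-algebraic precisely when it is represented only in weight $>i$.

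\smallskip
For the second (``However'') assertion I would argue on the torsion-free quotient. The restriction $H^{2i}_{et}(Q^d_{\bR};\bZ_2(i))\to H^{2i}_{et}(Q^d_{\bC};\bZ_2(i))=\bZ_2$ has kernel and cokernel killed by $2$ (transfer: $\mathrm{cor}\circ\mathrm{res}=1+\sigma$, $\mathrm{res}\circ\mathrm{cor}=2$), so modulo torsion $H^{2i}_{et}(Q^d_{\bR};\bZ_2(i))$ embeds in $H^{2i}(Q^d(\bC))^{C_2}=\bZ_2$, and it remains to match its image with the image of $CH^i(Q^d_{\bR})$. For $i\le m$ both equal $\bZ_2$, generated by $h^i$. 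For $i>m$ the generator is the linear-subspace class, which is not $\bR$-rational although twice it is, so the Chow image is the index-$2$ subgroup; I must check the free étale image is no larger. In the Galois descent spectral sequence $E_2^{p,q}=H^p(C_2;H^q(Q^d(\bC);\bZ_2(i)))$ the edge term $E^{0,2i}_\infty$ is a subgroup of $E^{0,2i}_2=\bZ_2$, and the non-rationality of $\ell h^{\,j}$ is exactly the statement that a differential $d_r\colon E_r^{0,2i}\to E_r^{r,\,2i-r+1}$ maps onto the $\bZ/2$ in its (torsion) target, cutting $E^{0,2i}_\infty$ down to $2\bZ_2$. Since the Chow image is already $2\bZ_2$, the cycle map is onto modulo torsion, proving that $H^{2*}_{et}(Q^d;\bZ_2(*))/(2\text{-torsion})$ has no non-algebraic element.

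\smallskip
By this, the non-algebraic elements can only be $2$-torsion, coming from the positive Galois rows $E_2^{p,q}$ with $p>0$, equivalently from motivic classes of weight $>$ half their degree. I first note the role of the congruence: for $c\equiv 0\ (4)$ one has $\bZ_2(c/2)\cong\bZ_2$ as $C_2$-modules, so $H^c_{et}(Q^d;\bZ_2)$ really is the target of the cycle map in those degrees; this is the reason the statement restricts to $c\equiv 0\ (4)$. For such $c$ in the range I would exhibit, from the explicit $\bZ/2[\tau,\rho]$-generators of $H^{*,*}_M(Q^d;\bZ/2)$ carried by the Rost-motive summand, a degree-$c$ class of weight $>c/2$ that is not a $\tau$-multiple of a weight-$c/2$ class; its image in $H^c_{et}$ is then a non-algebraic element. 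The bounds $4\le c\le 2^{n+1}-12=2d-10$ are precisely the span of these higher-weight generators and the lengths of their $\tau$-towers; for $n=3$ (so $d=7$) the range degenerates to the single degree $c=4$, in agreement with Theorem 1.1.

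\smallskip
The main obstacle is controlling the differentials of the descent spectral sequence --- equivalently the $\tau$- and $\rho$-multiplications and the integral Bocksteins on $H^{*,*}_M(Q^d)$ --- finely enough to separate the $2$-torsion classes that are killed or algebraic from those that survive and are non-algebraic. This rests on the explicit motivic cohomology of the norm quadric, namely its Rost-motive summand assembled with the cellular ``split'' part, and it is exactly this combinatorics of generators, relations and $\tau$-tower lengths that forces both the congruence $c\equiv 0\ (4)$ and the sharp upper bound $2^{n+1}-12$.
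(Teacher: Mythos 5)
Your proposal is a plan rather than a proof, and it has one structural error plus one acknowledged hole, both at the heart of the statement. The structural error: over $\bR$ the complement of the Rost motive in $M(Q^{2^n-1})$ is \emph{not} a ``cellular split part'' --- an anisotropic quadric has no Tate summands, and the actual decomposition (Lemma 7.1, Rost/[Ya1]) is $M(Q^{2^n-1})\cong M_n\oplus M_{n-1}\otimes M(\tilde\bP^{2^{n-1}-1})$, i.e.\ shifted copies of the \emph{smaller} Rost motive. This matters because your plan extracts all non-algebraic classes from the Rost summand $M_n$ alone, and that provably cannot work: in the degrees $c=2^{n+1}-2^{i+1}$ with $3\le i\le n-1$ (which lie inside the claimed range $4\le c\le 2^{n+1}-12$ as soon as $n\ge 4$) the unique torsion class $\bar\rho_c$ of $M_n$ equals $cl(c_i)$ and is therefore \emph{algebraic}. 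Under your ``split complement'' picture you would conclude these degrees contain no non-algebraic element, contradicting the theorem. The paper fills exactly these degrees from the second summand, taking $x=\bar\rho_{2^n-2^{i+1}+4}\,h^{2^{n-1}-2}$ in $H^{2*}_{et}(M_{n-1}\otimes\tilde\bP^{2^{n-1}-1};\bZ_2)$ and checking $2^n-2^{i+1}+4\neq 2^n-2^{k+1}$ so that it is non-algebraic there (proof of Theorem 7.2). Note also that the upper bound $2^{n+1}-12$ does not come from ``$\tau$-tower lengths'': it arises because the two top degrees $2^{n+1}-4=|c_1|$ and $2^{n+1}-8=|c_2|$ are occupied by algebraic torsion.

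The acknowledged hole is the integral computation itself, which you defer (``I would exhibit\ldots'' / ``the main obstacle''). The paper's Sections 4--6 carry it out: long exact sequences for $\bZ/2^{s-1}\to\bZ/2^s\to\bZ/2$ in each relevant bidegree, with the key subtlety in Lemma 5.1 that the classes $a_s\in H^{4m+2,4m+3}(M;\bZ/2^s)$ exist at every finite level but do \emph{not} survive the inverse limit $\lim_s$, while the $\bar\rho_{4m}$ survive as $\bZ/2$'s and the top class assembles into a free $\bZ_2$ on $\pi=cl(c_0)$; this is why only $c\equiv 0\bmod 4$ appears, something a naive ``invert $\tau$'' picture with $\bZ_2$ coefficients does not see. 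Finally, for the ``However'' part your restriction/descent route leaves the decisive step (the differential killing half of $E^{0,2i}_2$, equivalently that the free \'etale image is no larger than the Chow image) as an assertion, and your transfer identities are swapped ($cor\circ res=2$, $res\circ cor=1+\sigma$); the paper avoids this machinery entirely, since Theorem 6.1 already gives $H^{2*}_{et}(M_k;\bZ_2(*))/(torsion)\cong\bZ_2\{1,cl(c_0)\}$, so assembling the decomposition (Corollary 8.3: the free quotient is $\bZ_2[h]/(h^{2^n})$) exhibits the torsion-free quotient as generated by manifestly algebraic classes.
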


The above theorems follow from
\begin{thm}
Let $M_n\subset Q^{2^n-1}$ be the Rost motive [Ro] 
of the 
norm variety.  Then there are elements 
$\pi\in H^{2^{n+1}-2}_{et}(M_n; \bZ_2(1))$ and 
$\bar \rho_{4m}\in
H^{4m}_{et}(M_n;\bZ_2)$ such that 
\[ H^{2*}_{et}(M_n;\bZ_2(*))\cong
\bZ_2\{1,\pi\}\oplus \bZ/2\{\bar \rho_4,
\bar \rho_8,...,\bar \rho_{2^{n+1}-4} \}\]
\[ \cong \bZ_2\{1,\pi\}\oplus \bZ/2[\bar \rho_4]^+/(
\bar \rho_4^{2^{n-1}})
%\cong
%\bZ_2[\pi,\bar \rho_4]/(\pi^2,\pi\bar \rho_4,2\bar \rho_4,
%\bar \rho_4^{2^{n-1}}).
\]
where  $A\{a,b,...\}$ means the  $A$-free module generated by $a,b,...$
The image of the cycle map is given \ \
\[CH^*(M_n)\otimes \bZ_2\cong
\bZ_2\{1,\pi\}\oplus \bZ/2\{\bar \rho_{2^{n+1}-2^{n}},
\bar \rho_{2^{n+1}-2^{n-1}},...,\bar \rho_{2^{n+1}-4} \} .\]
\end{thm}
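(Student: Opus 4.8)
The plan is to reduce everything to the geometric (split) situation over $\bar k=\bC$ together with arithmetic descent, and then to read off the cycle map from the known Chow groups of the Rost motive. First I would recall the split structure: over $\bar k$ the motive $M_n\bk$ is a direct sum of Tate motives $\bigoplus_{i=0}^{2^{n-1}-1}\bZ(d_i)[2d_i]$ with $0=d_0<d_1<\dots<d_{2^{n-1}-1}=2^n-1$ (the $d_i$ coming from the motivic decomposition of the norm quadric, not an arithmetic progression), so that $\bch^*(M_n):=CH^*(M_n\bk)$ is $\bZ$-free on generators $y_0=1,\dots,y_{2^{n-1}-1}$ and $H^{2*}_{et}(M_n\bk;\bZ_2(*))=\bch^*(M_n)\otimes\bZ_2$ is concentrated in even degrees and torsion free. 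The restriction map $\Res$ to $\bar k$ therefore detects exactly the torsion-free part: $1$ and $\pi$ map to $1$ and to a nonzero multiple of the top generator $y_{2^{n-1}-1}$, which accounts for the summand $\bZ_2\{1,\pi\}$, while every $2$-torsion class dies under $\Res$.

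Second, to obtain the torsion I would compute $H^{*}_{et}(M_n;\bZ/2)$ over $\bR$ and then pass to $\bZ_2$-coefficients by a Bockstein argument. The mod $2$ motivic cohomology of the Rost motive is known (Voevodsky, and the author's earlier computations): as a module over $H^{*,*}(\bR;\bZ/2)=\bZ/2[\tau,\rho]$ it is generated by the $y_i$ subject to the relations coming from the Milnor operations $Q_0,\dots,Q_{n-1}$, which is precisely what forces the $\rho$-towers to be finite. Inverting $\tau$ (Beilinson--Lichtenbaum, a theorem here by Bloch--Kato) turns motivic into étale cohomology, and the surviving classes assemble into the truncated polynomial algebra $\bZ/2[\bar\rho_4]^+/(\bar\rho_4^{2^{n-1}})$. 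The Bockstein long exact sequence then identifies these with honest $\bZ/2$-summands $\bar\rho_{4m}=\bar\rho_4^{m}$ lying in $H^{4m}_{et}(M_n;\bZ_2)$ and shows that no further $\bZ_2$-extensions occur, giving the first displayed isomorphism.

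Finally, for the image of the cycle map I would invoke the computation of the integral Chow group $CH^*(M_n)$ and of its cycle class (Karpenko--Merkurjev, together with the author's work on torsion in Chow groups of quadrics). The free classes $1$ and $\pi$ are visibly algebraic. Among the torsion, a power $\bar\rho_4^{\,m}$ is algebraic precisely when it lifts to $CH^*(M_n)$; comparing the motivic torsion, which surjects onto $\Img(cl)$, with the full étale truncated algebra shows that the lifts occur exactly in the codimensions $2^{n+1}-2^{j}$, $2\le j\le n$, that is for the exponents $m=2^{n-1}-2^{j-2}$. This yields the stated image and, by difference, the non-algebraic classes. The main obstacle is this last step: one must pin down which powers of $\bar\rho_4$ are algebraic, which amounts to computing the cokernel of the cycle map on the $2$-torsion and tracking the precise binary pattern $2^{n-1}-2^{i}$ of surviving exponents. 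Everything else is essentially formal once the split structure and the mod $2$ motivic cohomology are in hand.
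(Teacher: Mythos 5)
There is a genuine gap at the decisive step of your proposal: the passage from the known mod $2$ computation $H^*_{et}(M_n;\bZ/2)\cong\bZ/2[\rho]/(\rho^{2^{n+1}-1})$ to $\bZ_2(*)$-coefficients, which you treat as a formal Bockstein argument. It is not. The integral Bockstein produces $2$-torsion classes $\bar\rho_c$ with $r(\bar\rho_c)=\rho^c$ for \emph{every} $c\le 2^{n+1}-2$ (this is the paper's Lemma 3.1), so nothing in such an argument explains why only the exponents $c\equiv 0\pmod 4$ survive in $H^{2*}_{et}(M_n;\bZ_2(*))$, nor why the top class is $\bZ_2$-free rather than torsion. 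The actual content of the paper (\S\S 4--6) is a computation of $H^{2*}(M_n;\bZ/2^s(*))$ for all $s$ in the correct weights --- $H^{4m,4m}$ for degree $4m$ and $H^{4m+2,4m+3}$ for degree $4m+2$ --- followed by the inverse limit over $s$. At every finite stage there \emph{are} classes $a_s$ in degrees $c\equiv 2\pmod 4$; they vanish only in the limit, by the non-formal Lemma 5.1: one shows $r(a_s)\neq a_{s-1}$, since $\delta(\rho\tau^2)=0$ makes $t$ injective, and $r(a_s)=a_{s-1}$ would force $0\neq t(a_{s-1})=2a_s$, contradicting $t(a_{s-1})=a_s$. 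Dually, in top degree $2^{n+1}-2$ the relation $\rho^{2^{n+1}-1}=0$ kills the Bockstein obstruction, so each stage contributes $\bZ/2^s$ and the limit gives the free class $\pi=cl(c_0)$. This weight-dependent inverse-limit analysis is exactly what makes the theorem true, and it is absent from your sketch; declaring it ``essentially formal'' is where the proof fails.

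Two smaller but real problems. First, your split structure is wrong: over $\bar k$ the Rost motive has exactly two Tate summands, $\bZ\oplus\bZ(2^n-1)[2^{n+1}-2]$ --- the paper uses $CH^*(\bar M)\cong\bZ\{1,y\}$ with $res(c_0)=2y$ --- not $2^{n-1}$ summands. With your $2^{n-1}$ generators, the claim that restriction detects a free part of rank exactly $2$ is not even internally consistent; and in any case, over $\bR$ the statement ``the kernel of restriction is exactly the torsion'' is an output of the computation, not a descent formality. Second, you flag the determination of which torsion classes are algebraic as ``the main obstacle'' and leave it open, but this is in fact the easy part: it is Rost's input, quoted in \S 2 of the paper, that $CH^*(M_n)\cong\bZ\{1,c_0\}\oplus\bZ/2\{c_1,\dots,c_{n-1}\}$ with $cl(c_i)=\rho^{2^{n+1}-2^{i+1}}$ and $cl/2$ injective, which immediately yields the stated image $\bZ_2\{1,\pi\}\oplus\bZ/2\{\bar\rho_{2^{n+1}-2^{i+1}}\}$. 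So your proposal leaves open a step that is a citation, while omitting the step that required a new argument.
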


\section{Rost motive over $\bR$ with coefficients $\bZ/2$}

Let $X$ be a (motive of) a smooth variety over the field
 $\bR$ of real numbers,
and we consider the cohomologies of $\bZ/2$ coefficients.
In this paper the $mod(2)$ etale cohomology means the 
motivic cohomology of the same first and the second degrees
$ H_{et}^*(X;\bZ/2)\cong 
H^{*,*}(X;\bZ/2).$  
The cohomology operation of the etale theory means that of the motivic theory
identifying $H_{et}^*(X;\bZ/2)\cong lim_{N\to \infty}\tau^N
H^{*,*+N}(X;\bZ/2)$.

It is well known ([Vo1], [Vo2])
\[ H_{et}^*(Spec(\bC);\bZ/2)\cong \bZ/2, \quad 
 H^{*,*'}(Spec(\bC);\bZ/2)\cong \bZ/2[\tau], \]
\[ H_{et}^*(Spec(\bR);\bZ/2)\cong \bZ/2[\rho], \quad 
 H^{*,*'}(Spec(\bR);\bZ/2)\cong \bZ/2[\tau, \rho] \]
where $0\not=\tau\in H^{0,1}(Spec(\bR);\bZ/2)\cong \bZ/2$ and
where 
\[\rho=-1\in \bR^*/(\bR^*)^2\cong K_1^M(\bR)/2\cong H_{et}^1(Spec(\bR);\bZ/2).\]

We recall the cycle map from the Chow ring to the etale cohomology
\[cl/2: CH^*(X)/2\to H^{2*}_{et}(X;\bZ/2).
\]
This map is also written as
$ H^{2*,*}(X;\bZ/2)\stackrel{\times \tau^*}{\to}
H^{2*,2*}(X;\bZ/2).$
We say that $x\in H_{et}^*(X;\bZ/2)$ is algebraic if
$x\in Im(cl/2)$ (for $\bZ/2$ coefficients).

Let $Q$ be an anisotropic quadric of dimension $2^n-1$
(i.e. the norm variety).  Then we have the Rost motive  $M\subset Q$ [Ro].     We will see that the etale cohomology 
$H_{et}^*(M;\bZ/2)\subset H_{et}^*(Q;\bZ/2)$
has many non-algebraic elements.  It is known [Ya2]
\[ H_{et}^{*}(M;\bZ/2)\cong \bZ/2[\rho]/(\rho^{2^{n+1}-1})
\cong \bZ/2\{1,\rho,\rho^2,...,\rho^{2^{n+1}-2}\}\]
(see also [Be] Proposition 4.13, and Theorem 1.3,5.3 or the Remark page 575 in [Ya2]).

For the restriction map to the cohomology of $\bar M=M(\bC)$, we see,
from $res(\rho)=0$, 
\[Im (res: H_{et}^*(M;\bZ/2)\to H_{et}^*(\bar M;\bZ/2))=\bZ/2.\]

The 
Chow ring is also known [Ro]
\[CH^*(M
)/2\cong \bZ/2\{1,c_0,c_1....,c_{n-1}\}, 
\quad cl(c_i)=\rho^{2^{n+1}-2^{i+1}}.\]
The cycle map $cl/2$ is injective.
The elements $c_i$ is also written as 
\[ c_i=\rho^{2^{n+1}-2^{i+1} }
\tau^{-2^n+2^{i}}  
\quad  in \ CH^*(M)/2\subset H_{et}^{2*}(M:\bZ/2)[\tau^{-1}] \]

Therefore it is immediate
\begin{lemma}
The
nonzero element $\rho^c$  in 
\[H_{et}^{c}(M;\bZ/2)\cong \bZ/2
\{\rho^c\} \quad 
with \ \  c\not =2^{n+1}-2^{i+1}\]
  is non-algebraic.
\end{lemma}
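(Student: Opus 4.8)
The plan is to read the statement off directly from the two computations recorded above — the ring structure of $H_{et}^*(M;\bZ/2)$ and the explicit image $cl(c_i)=\rho^{2^{n+1}-2^{i+1}}$ — using that each graded piece of the cohomology is at most one-dimensional. First I would record that, since $H_{et}^*(M;\bZ/2)\cong\bZ/2[\rho]/(\rho^{2^{n+1}-1})$, the group $H_{et}^c(M;\bZ/2)$ equals $\bZ/2\{\rho^c\}$ for $0\le c\le 2^{n+1}-2$ and vanishes otherwise. In particular, in each degree the cohomology is generated by the single monomial $\rho^c$.

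Next I would determine $\Img(cl/2)$ degree by degree. The cycle map takes values in the even part $H_{et}^{2*}(M;\bZ/2)$, and by the description of $CH^*(M)/2$ as the $\bZ/2$-span of $1,c_0,\dots,c_{n-1}$, its image is spanned by $cl(1)=\rho^0$ and by the classes $\rho^{2^{n+1}-2^{i+1}}$ for $i=0,\dots,n-1$. These generators sit in pairwise distinct degrees, so in a fixed degree $c$ the image is either the whole one-dimensional group $\bZ/2\{\rho^c\}$, when $c$ is one of these degrees, or zero.

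Combining the two steps yields the conclusion. A nonzero class $\rho^c$ is algebraic precisely when $c$ lies in the set $\{0\}\cup\{2^{n+1}-2^{i+1}:0\le i\le n-1\}$: for any other even $c$ the one-dimensional group $H_{et}^c$ meets $\Img(cl/2)$ only in $0$, and for odd $c$ the class is non-algebraic simply because $cl/2$ lands in even degrees. This gives non-algebraicity for all $c\neq 2^{n+1}-2^{i+1}$ outside the trivial case $c=0$.

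I do not expect a genuine obstacle: once the ring structure and the list of algebraic generators are in hand, the argument is a comparison of degrees. The only point deserving care is the boundary value $c=0$, where $\rho^0=cl(1)$ is algebraic, so the hypothesis $c\neq 2^{n+1}-2^{i+1}$ must be read together with $c>0$; one also uses that odd-degree classes automatically fall outside the image of a map targeting $H_{et}^{2*}$.
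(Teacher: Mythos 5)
Your proposal is correct and matches the paper's argument, which is simply the remark ``Therefore it is immediate'' following the two displayed computations: since each $H_{et}^c(M;\bZ/2)$ is one-dimensional and the image of $cl/2$ is spanned by $1$ and the classes $\rho^{2^{n+1}-2^{i+1}}$, non-algebraicity in all other degrees follows by comparing degrees, exactly as you do. Your explicit handling of the boundary case $c=0$ and of odd $c$ is a sensible clarification of what the paper leaves implicit.
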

Since $M$ is a sub-motive of $Q$, we have
$H^*(M;\bZ/2)\subset H^*(Q;\bZ/2)$.
\begin{cor}
For each $1\le c\le 2^{n+1}-3$ with
$c\not =2^{n+1}-2^{i+1}$ 
the cohomology $H_{et}^{c}(Q;\bZ/2)$ (of the norm 
variety $Q$ with $deg=2^n-1$)
has a non-algebraic element.
\end{cor}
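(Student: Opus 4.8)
The plan is to promote the non-algebraicity on the Rost motive $M$, supplied by the Lemma, to the full quadric $Q$ by using that $M$ is a direct summand of the motive of $Q$. There is a projector (idempotent correspondence) $p$ with $M=(Q,p)$, and by the definition of the motivic cohomology and Chow groups of a motivic summand,
\[ H_{et}^*(M;\bZ/2)=p_*H_{et}^*(Q;\bZ/2),\qquad CH^*(M)/2=p_*\bigl(CH^*(Q)/2\bigr),\]
where $p_*$ is idempotent and restricts to the identity on each of its images. In particular the inclusion $H_{et}^*(M;\bZ/2)\subset H_{et}^*(Q;\bZ/2)$ used above is the inclusion of the summand $p_*H_{et}^*(Q;\bZ/2)$.

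The structural input I would record first, beyond the Lemma, is that the cycle map commutes with the action of the algebraic correspondence $p$:
\[ cl/2\circ p_*=p_*\circ cl/2\ :\ CH^*(Q)/2\longrightarrow H_{et}^{2*}(Q;\bZ/2),\]
which holds because $p$ is itself an algebraic cycle and the cycle map is functorial for correspondences. Together with the previous display this says that $p_*$ carries $\Img(cl/2\colon CH^*(Q)/2\to H_{et}^*(Q;\bZ/2))$ into $\Img(cl/2\colon CH^*(M)/2\to H_{et}^*(M;\bZ/2))$.

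With these in hand the argument is short. Fix $c$ with $1\le c\le 2^{n+1}-3$ and $c\ne 2^{n+1}-2^{i+1}$; since the ring $\bZ/2[\rho]/(\rho^{2^{n+1}-1})$ is nonzero through degree $2^{n+1}-2$, the element $\rho^c$ is a nonzero class in $H_{et}^c(M;\bZ/2)$, and by the Lemma it is non-algebraic on $M$, i.e. $\rho^c\notin \Img(cl/2\colon CH^*(M)/2\to H_{et}^*(M;\bZ/2))$. Suppose, for contradiction, that $\rho^c$ were algebraic on $Q$, say $\rho^c=cl/2(y)$ for some $y\in CH^*(Q)/2$. Because $\rho^c\in p_*H_{et}^*(Q;\bZ/2)$ and $p_*$ is the identity there,
\[ \rho^c=p_*\rho^c=p_*\,cl/2(y)=cl/2(p_*y),\]
with $p_*y\in CH^*(M)/2$. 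Hence $\rho^c$ would be algebraic already on $M$, contradicting the Lemma. Therefore $\rho^c$ is a non-algebraic element of $H_{et}^c(Q;\bZ/2)$, which is the assertion.

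The only point deserving care — and the sole place where the passage from $M$ to $Q$ could break — is the compatibility $cl/2\circ p_*=p_*\circ cl/2$, together with the identification $p_*(CH^*(Q)/2)=CH^*(M)/2$ matching its cohomological counterpart. Both are standard in the formalism of Chow motives and their realizations, but they are exactly what upgrades \emph{non-algebraic on the summand $M$} to \emph{non-algebraic on $Q$}: without the projector one only knows $\rho^c\notin cl/2(CH^*(M)/2)$, which a priori says nothing about the larger group $cl/2(CH^*(Q)/2)$.
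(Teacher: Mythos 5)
Your proposal is correct and follows essentially the same route as the paper: the paper's entire argument is the one-line observation that $M$ is a sub-motive (direct summand) of $Q$, so that $H^*_{et}(M;\bZ/2)\subset H^*_{et}(Q;\bZ/2)$ and the non-algebraic classes $\rho^c$ from the Lemma persist in $Q$. You have merely made explicit the step the paper leaves implicit --- that the projector $p$ defining $M=(Q,p)$ commutes with the cycle map, so algebraicity on $Q$ would force algebraicity on $M$ --- and this is exactly the right justification, since the inclusion of cohomology groups alone would not suffice.
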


\section{quadrics with coefficients in $\bZ_2$}

In this section we consider integral coefficients case.
It is known [Ro], 
\[CH^*(M)\cong
\bZ\{1,c_0\}\oplus  \bZ/2\{c_1,...,c_{n-1}\}.\]
That is, $c_0$ is torsion free but $c_i,\ i\ge 1$ is just $2$-torsion. 

This $c_0$ is characterized as follows.  First note we can take
$M$ as an open variety (Theorem 5.11 in [Ya2]).  We consider the restriction map
\[ res :CH^*(M)\to CH^*(\bar M)\cong H^*(M(\bC))
\cong \bZ\{1,y\} \]
where $y$ is the fundamental class of 
$M(\bC)$ (and $Q(\bC))$.  It is known by Rost
$res (c_0)=2y.$  The rational coefficients case is written 
\[ CH^*(M)\otimes \bQ\cong \bQ\{1,c_0\}\cong 
\bQ\{1,y\}\cong H^{2*}(M(\bC);\bQ).\]

In this paper, the $2$-adic  integral $\bZ_2$ cohomology means the inverse limit
\[ H_{et}^*(M;\bZ_2)= 
Lim_{\infty \gets s
}H^{*,*}(M;\bZ/2^s)\]
of motivic cohomologies. 
However, we do $not$ treat $H_{et}^*(X;\bZ)$ itself, in this paper.

%$and $\beta(\tau \rho^{i-1})=\rho^i$ in $H^*(Spec\bR:\bZ/2$
%but it is not the case $H^*(M:\bZ/2)$ as follows.

We recall here the Lichtenberg cohomology
[Vo1,2]
such that 
\[ H_L^{*,*'}(X;\bZ)\cong H^{*,*'}(X;\bZ)\quad for\ *\le *'+1.\]
(The right side is the motivic cohomology.)
By the five lemma, we see (for $1/s\in k$)
\[ H_L^{*,*'}(X;\bZ/s)\cong H^{*,*'}(X;\bZ/s)
\quad  for\ \ *\le *'.\]
Moreover we have
$H_L^{2*,*'}(X;\bZ/s)\cong
H^{2*}_{et}(X;\mu_{s}^{* '\otimes })$.

%　In this paper we consider the cycle maps to this Lichtenberg (or motivic) cohomology instead of the etale cohomology itself.  We define the cycle map

In this paper we consider the cycle maps to this Lichitenberg
(or motivic) cohomology in stead of  the etale cohomology itself.  The cycle map is written
\[ cl: CH^*(X)\otimes \bZ_2\cong H^{2*,*}(X;\bZ_2 )\to H_L^{2*,*}(X;\bZ_2)\cong H_{et}^{2*}(X;\bZ_2(*))\]
where $\bZ(*)$ is the Galois module, when $k=\bR$,
 it acts as $(-1)^{*}$.  
Here we write  \ \ 
\[H^{2*}_{et}(X;\bZ_2(*))=\oplus_{m\ge 0}(H^{4m}_{et}X;\bZ_2)\oplus
H^{4m+2}_{et}(X;\bZ_2(1)).  \]
Note that it is the (graded) ring.

Let $k=\bR$. Moreover let $*=even$.  Then the right hand side cohomology is written
\[ H^{2*}_{et}(X;\bZ_2(*))\cong  H^{2*}_{et}(X;\bZ_2(even))\cong
H^{2*}_{et}(X;\bZ_2(2*))\]
\[ \cong H^{2*,2*}_{L}(X;\bZ_2)\cong H^{2*,2*}(X;\bZ_2).\]
Similarly, when $*=odd$, we see
\[ H^{2*}_{et}(X;\bZ_2(*))\cong  H^{2*}_{et}(X;\bZ_2(odd))\cong
H^{2*}_{et}(X;\bZ_2(2*+1))\]
\[ \cong H^{2*,2*+1}_{L}(X;\bZ_2)\cong H^{2*,2*+1}(X;\bZ_2).\]
Thus in this paper, the cycle map means ;
\[ cl: CH^*(X)\otimes \bZ_2\to H_{et}^{2*}(X;\bZ_2(*))\cong \begin{cases}
   H^{2*,2*}(X;\bZ_2)\quad for\ *=even\\
 H^{2*,2*+1}(X;\bZ_2)\quad for\ *=odd.\end{cases}.
\]
We say that $x\in H_{et}^{2*}(X;\bZ(*)$ is non-algebraic if $x\not =0 \ mod(Im(cl))$.

The short exact sequence $0 \to \bZ \stackrel{2}{\to} \bZ\to \bZ/2\to 0$ induces the  long exact sequence of motivic cohomology
\[ ... \to H^{*-1,*}(M;\bZ/2)\stackrel{\delta}{\to} H^{*,*}(M;\bZ)
    \stackrel{2}{\to}
H^{*,*}(M;\bZ)\stackrel{r}{\to}
 H^{*,*}(M;\bZ/2) 
\to ... \] 
By Voevodsky [Vo1], [Vo2]), it is known $\beta(\tau)=\rho$
for the Bockstein operation $\beta$.
Let us write  $\delta (\tau \rho^{i-1})=\bar \rho_i\in 
H^{*,*}(M;\bZ)$ so that 
$r(\bar \rho_i)=\beta(\tau \rho^{i-1})=\rho^i$
since $r\delta=\beta$. 
Moteover  $\bar \rho_i$ is $2$-torsion from the above sequence.

Hence for all $1\le c\le 2^{n+1}-2$, we see 
$H^{c,c}(M;\bZ)\not =0$.
The same fact holds each
$H^c_{et}(M;\bZ/2^s)$ and so $H^c_{et}(M;\bZ_2)$.

\begin{lemma} Let $N=2^{n+1}-2$.  Then
\[ \bZ_2\{1,cl(c_0)\}\oplus \bZ/2\{\bar \rho_1,...,\bar \rho_{N}\}\subset 
H^*_{et}(M;\bZ_2)\oplus H^*_{et}(M;\bZ_2(1)).\] 
The element $\bar \rho_c$ with 
$c=0\ mod(4)$ and $c\not = 2^{n+1}-2^{i+1}$ 
is  a non-algebraic element.
\end{lemma}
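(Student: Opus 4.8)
The plan is to treat the inclusion and the non-algebraicity separately, reducing both to the mod $2$ picture of Section 2. For the inclusion I would begin with $\bar\rho_i=\delta(\tau\rho^{i-1})\in H^{i,i}(M;\bZ)$, which is $2$-torsion and satisfies $r(\bar\rho_i)=\rho^i$. Passing to $\bZ_2$-coefficients through the inverse limit $H^i_{et}(M;\bZ_2)=Lim_{s}\,H^{i,i}(M;\bZ/2^s)$, these $2$-torsion classes survive and give nonzero elements of $H^i_{et}(M;\bZ_2(i))$. Over $\bR$ the Galois module $\bZ_2(i)$ is trivial when $i$ is even and is the sign module $\bZ_2(1)$ when $i$ is odd, so $\bar\rho_i$ lies in $H^i_{et}(M;\bZ_2)$ for $i$ even and in $H^i_{et}(M;\bZ_2(1))$ for $i$ odd, which produces the two summands on the right. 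Their $\bZ/2$-independence is immediate from $r(\bar\rho_i)=\rho^i$ for $1\le i\le N=2^{n+1}-2$, since the $\rho^i$ are precisely the nonzero basis elements of $H^*_{et}(M;\bZ/2)=\bZ/2\{1,\rho,\dots,\rho^{2^{n+1}-2}\}$. For the free part, $1$ and $cl(c_0)$ restrict under $res$ to $1$ and $2y$ in $H^*(M(\bC))\cong\bZ\{1,y\}$, so they are $\bZ_2$-linearly independent and torsion free; since every $\bar\rho_i$ is $2$-torsion, the sum $\bZ_2\{1,cl(c_0)\}\oplus\bZ/2\{\bar\rho_1,\dots,\bar\rho_N\}$ is direct, giving the claimed inclusion.

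For the non-algebraicity I would argue by reducing mod $2$. The integral cycle map $cl\colon CH^*(M)\otimes\bZ_2\to H^{2*}_{et}(M;\bZ_2(*))$ and the mod $2$ cycle map $cl/2\colon CH^*(M)/2\to H^*_{et}(M;\bZ/2)$ sit in a commutative square with the coefficient-reduction maps, where over $\bR$ the reduction $\bZ_2(j)\to\bZ/2(j)\cong\bZ/2$ is the ordinary one. A class $\bar\rho_c$ with $c\equiv 0\pmod 4$ is even, so it lies in $H^c_{et}(M;\bZ_2)=H^{4m}_{et}(M;\bZ_2)$, which is a genuine summand of the target $H^{2*}_{et}(M;\bZ_2(*))$; thus it is legitimate to ask whether it is algebraic. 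If $\bar\rho_c=cl(\alpha)$ for some $\alpha$, then applying $r$ gives $\rho^c=r(\bar\rho_c)=cl/2(\bar\alpha)\in Im(cl/2)$. By the mod $2$ Lemma of Section 2, $\rho^c$ is algebraic only when $c=2^{n+1}-2^{i+1}$, contradicting the hypothesis; hence $\bar\rho_c$ is non-algebraic.

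The step I expect to be the main obstacle is the degree-and-weight bookkeeping that underlies the congruence $c\equiv 0\pmod 4$. One has to track the Tate twist so that each $2$-torsion class $\bar\rho_c$ is assigned to the correct summand, $H^{4m}_{et}(M;\bZ_2)$ versus $H^{4m+2}_{et}(M;\bZ_2(1))$, and check that $c\equiv 0\pmod 4$ is exactly the condition for $\bar\rho_c$ to meet the cycle-map-relevant summand of $H^{2*}_{et}(M;\bZ_2(*))$; note in particular that $2^{n+1}-2^{i+1}\equiv 0\pmod 4$ holds precisely for $i\ge 1$, while the exceptional value $i=0$ produces the torsion-free class $cl(c_0)$ in degree $\equiv 2\pmod 4$. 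Once this is in place, compatibility of the two cycle maps with mod $2$ reduction reduces the whole statement to the already-established mod $2$ case.
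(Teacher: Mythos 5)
Your proposal is correct and follows essentially the same route as the paper: the paper's (largely implicit) justification of this lemma is exactly your construction $\bar\rho_i=\delta(\tau\rho^{i-1})$ with $r(\bar\rho_i)=\rho^i$, $2$-torsion from the integral Bockstein sequence, passage to the limit $Lim_{s}H^{*,*}(M;\bZ/2^s)$, and reduction mod $2$ to the non-algebraicity lemma of Section 2, with the twist bookkeeping for $c\equiv 2 \bmod 4$ handled just as in the paper's Remark. Your extra care on the inverse-limit step is well placed, since classes like $a_s$ in odd twist fail to survive the limit, but your $\bar\rho_c$ come from genuine integral classes and hence form compatible systems, exactly as the paper asserts.
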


{\bf Remark.}
When $c=2\ mod(4)$, the element
$ \bar \rho_c \in H^c(M;\bZ_2)$ but not in 
$H^c(M;\bZ_2(1)).$  So we identify here $\bar \rho_c$ is not in
$H_{et}^{2*}(M;\bZ_2(*)).$

\section{$\bZ/4$-coefficients}

At first,  we study the case $M=M_2$ with the coefficients $\bZ/4$.
The short exact sequence $0 \to \bZ/2 \to \bZ/4\to \bZ/2\to 0$ induces 
\[...\stackrel{\delta}{\to} H^{*,*'}(M;\bZ/2)
\stackrel{t}{\to}
H^{*,*'}(M;\bZ/4)\stackrel{r}{\to}
 H^{*,*'}(M;\bZ/2) \stackrel{\delta}{\to}
H^{*+1,*'}(M;\bZ/2)\to ... \]    
Here when $*\le *'$. the cohomology $H^{*,*'}(M;\bZ/2)\cong \bZ/2[\tau]\{1,\rho,...,\rho^N\}$
with $N=2^{n+1}-2$. 
The map $\delta$ is the Bockstein operation $\beta$.

For the study $H^{2*}(M_2;\bZ_2(*))$,  we only need to check $H^{2,3}(-), H^{4,4}(-),H^{6,7}(-)$.
At first we consider 
\[H^{1.3}(M;\bZ/2)\stackrel{\delta}{\to} H^{2,3}(M;\bZ/2)
\stackrel{t}{\to}
H^{2,3}(M;\bZ/4)\stackrel{r}{\to}
 H^{2,3}(M;\bZ/2) \stackrel{\delta}{\to}
H^{3,3}(M;\bZ/2). \]    
Here $H^{*,*'}(M;\bZ/2)\cong \bZ/2[\tau]\{1,\rho,...,\rho^6\}$ since $*\le *'$.
 Hence we see 
\[H^{1,3}(M;\bZ/2)\cong \bZ/2\{\rho\tau^2\},\quad 
H^{2,3}(M;\bZ/2)\cong \bZ/2\{\rho^2\tau\}.\]
Since $\delta=\beta$, we see $\delta(\tau)
=\rho$.  Hence $\delta(\rho\tau^2)=0$
and $\delta(\rho^2\tau)=\rho^3\not =0.$ 
Thus the above long exact sequence implies 
\[ 0\to H^{2,3}(M;\bZ/2)
\stackrel{t}{\to}
H^{2,3}(M;\bZ/4)\to 0. \]
So we see that
$H^{2,3}(M;\bZ/4)\cong 
\bZ/2
\{t(\rho^2
\tau)\}. $

Next we consider 
\[H^{3.4}(M;\bZ/2)\stackrel{\delta}{\to} H^{4,4}(M;\bZ/2)
\stackrel{t}{\to}
H^{4,4}(M;\bZ/4)\stackrel{r}{\to}
 H^{4,4}(M;\bZ/2) \stackrel{\delta}{\to}
H^{5,4}(M;\bZ/2). \]    
 Here we see 
\[H^{3,4}(M;\bZ/2)\cong \bZ/2\{\rho^3\tau\},\quad 
H^{4,4}(M;\bZ/2)\cong \bZ/2\{\rho^4\}.\]
 Moreover $\delta(\rho^3\tau)=\rho^4$
and $\delta(\rho^4)=0.$ 
The long exact sequence becomes
\[ 0\to H^{4,4}(M;\bZ/4)\stackrel{r}{\to}
 H^{4,4}(M;\bZ/2) \to 0.\]
Thus we see that
$ H^{4,4}(M;\bZ/4)\cong \bZ/2\{r^{-1}(\rho^4)\}.$

At last we consider 
\[H^{5,7}(M;\bZ/2)\stackrel{\delta}{\to} H^{6,7}(M;\bZ/2)
\stackrel{t}{\to}
H^{6,7}(M;\bZ/4)\stackrel{r}{\to}
 H^{6,7}(M;\bZ/2) \stackrel{\delta}{\to}
H^{7,7}(M;\bZ/2)=0. \]    
 Hence we see 
\[H^{5,7}(M;\bZ/2)\cong \bZ/2\{\rho^5\tau^2\},\quad 
H^{6,7}(M;\bZ/2)\cong \bZ/2\{\rho^6\tau\}.\]
 Here $\delta(\rho^5\tau^2)=0$
and $\delta(\rho^6\tau)=\rho^7=0.$ 
The long exact sequence becomes
\[ 0\to H^{6,7}(M;\bZ/2)
\stackrel{t}{\to}
H^{6,7}(M;\bZ/4)\stackrel{r}{\to}
 H^{6,7}(M;\bZ/2) \to 0. \] 
Thus we see that
$ grH^{6,7}(M;\bZ/4)\cong 
\bZ/2\{t(\rho^6\tau), r^{-1}(\rho^6\tau)\}.$
\begin{lemma}  We have the isomorphism
\[grH^{2*}(M;\bZ/4(*))\cong \bZ/2\{1,2\}
\oplus \bZ/2\{r^{-1}(\rho^6\tau), t(\rho^6\tau),  
 t(\rho^2\tau), r^{-1}(\rho^4)\}. \]
\end{lemma}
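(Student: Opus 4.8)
The plan is to read off $grH^{2*}(M;\bZ/4(*))$ degree by degree, exactly paralleling the integral bookkeeping of \S 3, and to quote the three Bockstein computations carried out just above. First I would pin down which bidegrees actually contribute. Using the decomposition $H^{2*}_{et}(X;A(*))=\oplus_{m}H^{4m}_{et}(X;A)\oplus H^{4m+2}_{et}(X;A(1))$ together with the weight identifications $H^{2*}_{et}(M;A(*))\cong H^{2*,2*}(M;A)$ for $*$ even and $H^{2*}_{et}(M;A(*))\cong H^{2*,2*+1}(M;A)$ for $*$ odd (valid for $A=\bZ/4$ by the same argument as for $A=\bZ_2$), the only candidates for $M=M_2$ are the groups in bidegrees $(0,0)$, $(2,3)$, $(4,4)$, $(6,7)$, i.e. $*=0,1,2,3$. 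Because the mod $2$ cohomology in the clean range $*\le *'$ is the free $\bZ/2[\tau]$-module $\bZ/2[\tau]\{1,\rho,\dots,\rho^6\}$, so that $\rho^{c}=0$ for $c\ge 7$, every bidegree with $*\ge 4$ vanishes mod $2$ and hence, via the long exact sequence of $0\to\bZ/2\to\bZ/4\to\bZ/2\to 0$, also with $\bZ/4$ coefficients.

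Next I would assemble the four surviving summands. In degree zero $H^{0,0}(M;\bZ/4)\cong\bZ/4$, whose associated graded for the filtration $2\bZ/4\subset\bZ/4$ (the image of $t$) is $\bZ/2\{1\}\oplus\bZ/2\{2\}$; this is the summand $\bZ/2\{1,2\}$. The remaining three summands are literally the outputs of the computations above: $H^{2,3}(M;\bZ/4)\cong\bZ/2\{t(\rho^2\tau)\}$, $H^{4,4}(M;\bZ/4)\cong\bZ/2\{r^{-1}(\rho^4)\}$, and $grH^{6,7}(M;\bZ/4)\cong\bZ/2\{t(\rho^6\tau),r^{-1}(\rho^6\tau)\}$. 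Taking the direct sum of the four contributions produces the displayed isomorphism.

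The single point that forces the word $gr$ into the statement is the extension problem. In bidegree $(6,7)$ the Bockstein sequence only yields a short exact sequence $0\to\bZ/2\to H^{6,7}(M;\bZ/4)\to\bZ/2\to 0$, whose middle term could be $\bZ/4$ rather than $\bZ/2\oplus\bZ/2$, and the degree zero term is honestly $\bZ/4$; passing to the associated graded is what makes the clean direct-sum description hold. Thus the genuine content sits entirely in the three Bockstein computations above --- which rest on knowing $H^{*,*'}(M;\bZ/2)$ in the range $*\le *'$ and the rule $\delta=\beta$ with $\beta(\tau)=\rho$ (so $\beta(\rho)=0$) --- while the lemma itself is the assembly together with the verification that the bidegrees $*\ge 4$ drop out. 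I expect this last vanishing check and the correct choice of weights to be the only places where an error could creep in.
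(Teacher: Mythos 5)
Your proposal is correct and takes essentially the same route as the paper: the paper's proof of this lemma consists precisely of the three Bockstein long exact sequence computations in bidegrees $(2,3)$, $(4,4)$, $(6,7)$ that you quote, assembled with the degree-zero term $\bZ/4$ whose associated graded is $\bZ/2\{1,2\}$. The points you make explicit --- the weight bookkeeping restricting attention to $*\le 3$ via the vanishing of $\rho^c$ for $c\ge 7$, and the observation that $gr$ is forced by a possible nontrivial extension in bidegree $(6,7)$ --- are left implicit in the paper, and the latter is in fact confirmed in Section 5, where $0\neq t(b_s)=2b_s$ shows the extension is genuinely nonsplit.
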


\section{$\bZ/2^s$-coefficients}

We generalize the above lemma to $\bZ/2^s$ coefficients.
The short exact sequence $0 \to \bZ/2^{s-1} \to \bZ/(2^s)\to \bZ/2\to 0$ induces the long exact sequence.  For example 
\[H^{1.3}(M;\bZ/2)\stackrel{\delta}{\to} H^{2,3}(M;\bZ/2^{s-1})
\stackrel{t}{\to}
H^{2,3}(M;\bZ/2^s)\stackrel{r}{\to}
 H^{2,3}(M;\bZ/2) \stackrel{\delta}{\to}
H^{3,3}(M;\bZ/2^{s-1}). \]    
Recall that 
\[H^{1,3}(M;\bZ/2)\cong \bZ/2\{\rho\tau^2\},\quad 
H^{2,3}(M;\bZ/2)\cong \bZ/2\{\rho^2\tau\}.\]
By inductive assumption,   there is $a_{s-1} \in H^{2,3}(M;\bZ/2^{s-1})$ such that $\delta(\rho\tau^2)\not= a_{s-1}$ 
(let $a_2=t(\rho^2\tau)$).
Moreover $\delta(\rho^2\tau)=\rho^3\not =0$ and so $r=0$.
Thus we see that
\[ H^{2,3}(M;\bZ/2^s)\cong \bZ/2\{t(a_{s-1})\}.\]

\begin{lemma}
In $Lim_{\infty \gets s}H^{2.3}(M,\bZ/2^s)$, the element $a_s$ does not exist.
\end{lemma}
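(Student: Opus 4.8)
The plan is to analyze the transition homomorphisms of the inverse system $\{H^{2,3}(M;\bZ/2^s)\}_s$ and to show that they all vanish, so that the nonzero generators $a_s$ cannot assemble into a compatible family and the limit is $0$.

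First I would isolate precisely the two coefficient maps in play, since the whole argument turns on not confusing them. The inductive computation above identifies $H^{2,3}(M;\bZ/2^s)\cong \bZ/2\{a_s\}$ with $a_s=t(a_{s-1})$, where $t$ is the map induced by the injection $\bZ/2^{s-1}\stackrel{\times 2}{\to}\bZ/2^s$ occurring in the short exact sequence $0\to \bZ/2^{s-1}\to \bZ/2^s\to \bZ/2\to 0$. On the other hand, the $2$-adic limit $Lim_{\infty\gets s}\,\bZ/2^s\cong \bZ_2$ is formed using the reduction maps $R:\bZ/2^s\to \bZ/2^{s-1}$, and it is these $R$ that induce the transition homomorphisms $R_*:H^{2,3}(M;\bZ/2^s)\to H^{2,3}(M;\bZ/2^{s-1})$ of the system whose limit we must understand.

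Next I would compute the composite on coefficients $\bZ/2^{s-1}\stackrel{\times 2}{\to}\bZ/2^s\stackrel{R}{\to}\bZ/2^{s-1}$; it sends $1\mapsto 2$, hence is multiplication by $2$ on $\bZ/2^{s-1}$. Since cohomology is additive in its coefficient group, the induced map $R_*\circ t$ on $H^{2,3}(M;\bZ/2^{s-1})$ is multiplication by $2$, so that $R_*(a_s)=R_*(t(a_{s-1}))=2\,a_{s-1}$. But $H^{2,3}(M;\bZ/2^{s-1})\cong \bZ/2\{a_{s-1}\}$ is killed by $2$, whence $R_*(a_s)=0$.

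Finally, with every transition map of the system equal to zero, any element of $Lim_{\infty\gets s}H^{2,3}(M;\bZ/2^s)$ is a sequence $(x_s)$ with $x_{s-1}=R_*(x_s)=0$ for all $s$; hence the limit vanishes and no lift of the $a_s$ survives. The only real point to watch, and the step I expect to require the most care, is exactly this bookkeeping of the two distinct coefficient maps — the $\times 2$ inclusion $t$ used to build $a_s$ versus the reduction $R$ used to form the $2$-adic limit — together with the (standard) additivity of cohomology in its coefficients which turns their composite into multiplication by $2$ and so forces the transition maps to be zero.
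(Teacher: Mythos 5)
Your proof is correct, and it is a genuine (if modest) streamlining of the paper's argument rather than a literal match. The paper reduces everything to the same arithmetic fact — that composing the map $t$ (induced by $\bZ/2^{s-1}\stackrel{\times 2}{\to}\bZ/2^s$) with the coefficient reduction gives multiplication by $2$ — but uses the composite in the other order: it invokes $t\circ r=\times 2$ on $H^{2,3}(M;\bZ/2^s)$ and argues by contradiction, assuming $r(a_s)=a_{s-1}$ to get $a_s=t(a_{s-1})=2a_s=0$, contradicting $t(a_{s-1})\neq 0$; to set this up the paper must first verify that $t$ is injective, which it deduces from $\delta(\rho\tau^2)=0$ (via $\beta(\tau^3)=\rho\tau^2$, the class lifting to $H^{1,3}(M;\bZ)$). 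Your version instead uses the composite $R\circ t=\times 2$ on $H^{2,3}(M;\bZ/2^{s-1})$ and computes directly $R_*(a_s)=R_*(t(a_{s-1}))=2a_{s-1}=0$, since $H^{2,3}(M;\bZ/2^{s-1})\cong \bZ/2\{a_{s-1}\}$ is killed by $2$; this needs no contradiction and no injectivity of $t$ beyond what the preceding computation ($H^{2,3}(M;\bZ/2^s)\cong \bZ/2\{t(a_{s-1})\}$, $a_s\neq 0$) already supplies. What your route buys is the stronger and cleaner conclusion that every transition map of the inverse system vanishes in this bidegree, so $Lim_{\infty\gets s}H^{2,3}(M;\bZ/2^s)=0$ outright, and you make explicit the one genuinely delicate bookkeeping point — that the limit is formed with the reductions $R$ while $a_s$ is built with the inclusion $t$ — which the paper leaves implicit. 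Conceptually the two proofs rest on the identical coefficient-sequence arithmetic; yours simply removes the detour through injectivity of $t$ and the reductio.
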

\begin{proof}
We will show $r(a_s)\not =a_{s-1}$.
We consider the diagram
\[ \begin{CD}
@. H^{2,3}(M;\bZ/2^s) @>2>> H^{2,3}(M;\bZ/2^s) \\
@. @VVrV  @V=VV \\
  H^{1,3}(M;\bZ/2) @>{\delta}>> 
H^{2,3}(M;\bZ/2^{s-1}) @>t>> H^{2,3}(M;\bZ/2^s) 
\end{CD} \]

Since $\beta(\tau^3)=\rho\tau^2$, this element comes from $H^{1,3}(M;\bZ)$  and hence $\delta(\rho\tau^2)=0$.  So $t$ is injective.

Suppose $r(a_s)=a_{s-1}$.  By the commutativity of the above diagram, we have 
   \[ 0\not = ta_{s-1}=2a_s.\]
This is a contradiction to that $t(a_{s-1})=a_s$.
\end{proof}

Similarly we see $H^{4,4}(M;\bZ/2^s)$.

For the case $H^{6,7}(M;\bZ/2^s)$, let $b_2=t(\rho^6\tau)$.
The elements $b_s$ need not just $2$-torsion.  By the arguments as the proof of the above lemma,  we have 
\[ 0\not =t(b_{s})=2b_s \quad in\ H^{6,7}(M;\bZ/2^s).\]

\begin{thm}  For $s\ge 1$,we have the isomorphism
\[H^{2*}(M;\bZ/2^s(*))\cong \bZ/2^s\{1,r^{-1}(\rho^6\tau)\}
\oplus \bZ/2\{ a_s, r^{-1}(\rho^4)\}. \]
\end{thm}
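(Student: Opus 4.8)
The plan is to establish the isomorphism by induction on $s$, treating $M=M_2$ (so $N=2^{n+1}-2=6$) and analysing separately the four bidegrees $(0,0)$, $(2,3)$, $(4,4)$, $(6,7)$, which are exactly the pairs $(2*,2*)$ for $*$ even and $(2*,2*+1)$ for $*$ odd with $0\le 2*\le 6$ that assemble into $H^{2*}(M;\bZ/2^s(*))$. The base case $s=1$ is the known computation $H^{*,*'}(M;\bZ/2)\cong \bZ/2[\tau]\{1,\rho,\dots,\rho^6\}$ in the range $*\le *'$, which gives $H^{2*}(M;\bZ/2(*))\cong \bZ/2\{1,\rho^2\tau,\rho^4,\rho^6\tau\}$. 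For the inductive step I would feed the hypothesis for $s-1$ into the long exact sequence attached to $0\to \bZ/2^{s-1}\to \bZ/2^s\to \bZ/2\to 0$ in each of the four bidegrees.

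The computational engine is the Bockstein formula $\beta(\rho^i\tau^j)=j\,\rho^{i+1}\tau^{j-1}$, coming from $\beta(\tau)=\rho$, $\beta(\rho)=0$ and the Leibniz rule, together with the relation $\rho^7=0$. I would also use the two standing facts that the reduction to $\bZ/2$ of the connecting map $\delta$ equals $\beta$ (so $\beta(x)\ne 0$ forces $\delta(x)\ne 0$), and that an element lying in the image of integral reduction is annihilated by $\delta$. Running the bookkeeping degree by degree: in bidegree $(2,3)$ one has $\beta(\rho\tau^2)=0$ with $\rho\tau^2=\beta(\tau^3)$ integral, so the incoming $\delta$ vanishes, while $\beta(\rho^2\tau)=\rho^3\ne 0$ forces the outgoing $r=0$; hence $H^{2,3}(M;\bZ/2^s)\cong\bZ/2\{a_s\}$ with $a_s=t(a_{s-1})$. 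In bidegree $(4,4)$ the incoming $\delta$ is onto because $\beta(\rho^3\tau)=\rho^4$ generates $H^{4,4}(M;\bZ/2^{s-1})\cong\bZ/2\{r^{-1}(\rho^4)\}$ by induction, while $\delta(\rho^4)=0$ since $\rho^4$ is the mod-$2$ reduction of the $2$-torsion integral class $\bar\rho_4$; so $t=0$, $r$ is injective and onto, giving $H^{4,4}(M;\bZ/2^s)\cong\bZ/2\{r^{-1}(\rho^4)\}$. In bidegree $(6,7)$ both $\beta(\rho^5\tau^2)=0$ and $\beta(\rho^6\tau)=\rho^7=0$, and $\rho^5\tau^2=\beta(\rho^4\tau^3)$ is integral, so the incoming $\delta$ vanishes and $r$ is surjective; this produces a short exact sequence $0\to \bZ/2^{s-1}\to H^{6,7}(M;\bZ/2^s)\to \bZ/2\to 0$.

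The crux is to identify this last extension, and I expect it to be the main obstacle. Here I would reuse the commutative-square argument from the proof of the lemma above, which expresses multiplication by $2$ on $H^{6,7}(M;\bZ/2^s)$ as the composite of reduction to $\bZ/2^{s-1}$ followed by $t$. Writing $b_s=r^{-1}(\rho^6\tau)$, this gives $2b_s=t(b_{s-1})$, and since the incoming $\delta$ vanishes in this bidegree, $t$ is injective, so $2b_s\ne 0$; hence $b_s$ has order $2^s$, the extension is nonsplit, and $H^{6,7}(M;\bZ/2^s)\cong\bZ/2^s\{r^{-1}(\rho^6\tau)\}$. Combining the four bidegrees with the trivial $H^{0,0}(M;\bZ/2^s)\cong\bZ/2^s\{1\}$ yields the stated decomposition. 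The delicate point is that $a_s$ and $b_s$ are both manufactured from the previous stage by $t$, yet $a_s$ remains $2$-torsion while $b_s$ grows to order $2^s$; this dichotomy is forced by the Bockstein, since $\beta(\rho^2\tau)=\rho^3\ne 0$ kills $r$ in bidegree $(2,3)$ and blocks any growth, whereas $\beta(\rho^6\tau)=\rho^7=0$ leaves $r$ surjective and creates the room for the nonsplit extension detected by $2b_s=t(b_{s-1})$.
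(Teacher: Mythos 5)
Your proposal is correct and takes essentially the same route as the paper: induction on $s$ via the long exact sequence for $0\to\bZ/2^{s-1}\to\bZ/2^s\to\bZ/2\to 0$ in the bidegrees $(2,3)$, $(4,4)$, $(6,7)$, with the integrality of $\beta$-images killing the incoming $\delta$, the nonvanishing Bocksteins $\beta(\rho^2\tau)=\rho^3$ and $\beta(\rho^3\tau)=\rho^4$ controlling $r$ and $t$, and the extension in $(6,7)$ resolved by the same factorization of multiplication by $2$ as $t\circ r$ that the paper uses in Lemma 5.1 to get $0\neq t(b_{s-1})=2b_s$. You simply spell out the steps the paper compresses (the $H^{4,4}$ case, stated only as ``similarly,'' and the order-$2^s$ count showing $b_s$ generates $\bZ/2^s$), so this is a faithful, slightly more detailed version of the paper's own argument.
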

Since $a_s$ does not exists when we take the inverse limit,
we get
\begin{cor}  The etale cohomology $H^{2*}_{et}(M;\bZ_2(*))$ is isomorphic to 
\[Lim_{\infty \gets s} H^{2*}_{et}(M;\bZ/2^s(*))\cong
 \bZ_2\{1,r^{-1}(\rho^6\tau)\}
\oplus \bZ/2\{ r^{-1}(\rho^4)\}.\]
\end{cor}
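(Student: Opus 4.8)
The plan is to read the answer off Theorem~5.2 by passing to the inverse limit over the reduction maps, so that the entire problem reduces to understanding a single transition map on each of the four generators. Write $\pi_s : H^{2*}(M;\bZ/2^s(*)) \to H^{2*}(M;\bZ/2^{s-1}(*))$ for the transition map of the system defining $H^{2*}_{et}(M;\bZ_2(*))$; it is the reduction induced by $\bZ/2^s \to \bZ/2^{s-1}$, and is exactly the left vertical arrow (labelled $r$) in the diagram of Lemma~5.1. The organizing principle I would use is that $\pi_s$ is compatible with the mod-$2$ reduction $r$ that appears in the generators $r^{-1}(\rho^4)$ and $r^{-1}(\rho^6\tau)$: reducing a class at level $s$ directly to $\bZ/2$ coefficients equals first applying $\pi_s$ and then reducing mod $2$ at level $s-1$. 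Hence the fate of each generator in the limit is governed by whether its mod-$2$ reduction vanishes.

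First I would dispose of the three generators whose mod-$2$ reduction is nonzero. The unit $1$ satisfies $r(1)=1\neq 0$ and generates the order-$2^s$ summand, so $\pi_s(1)=1$ and this piece contributes $\bZ_2\{1\}$. For $r^{-1}(\rho^6\tau)$, whose mod-$2$ reduction is $\rho^6\tau\neq 0$, compatibility gives $r(\pi_s(r^{-1}(\rho^6\tau)))=\rho^6\tau\neq 0$; a class with nonzero mod-$2$ reduction cannot lie in $2H^{6,7}(M;\bZ/2^{s-1})$, so its image under $\pi_s$ is again a generator of the cyclic group $\bZ/2^{s-1}$. Thus every $\pi_s$ is surjective here and the limit is $\bZ_2\{r^{-1}(\rho^6\tau)\}$. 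The same argument applied to the $2$-torsion class $r^{-1}(\rho^4)$ shows $\pi_s$ is an isomorphism on the summands $\bZ/2\{r^{-1}(\rho^4)\}$, whose limit is $\bZ/2\{r^{-1}(\rho^4)\}$.

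It remains to kill the generator $a_s$, and this is where Lemma~5.1 does the essential work. Unlike the other three classes, $a_s$ lies in the image of $t$, so its mod-$2$ reduction is $r(a_s)=0$. By compatibility $r(\pi_s(a_s))=r(a_s)=0$, so $\pi_s(a_s)$ lies in the kernel of $r$ on $H^{2,3}(M;\bZ/2^{s-1})\cong\bZ/2\{a_{s-1}\}$; since $r(a_{s-1})=0$ as well, this kernel is all of $\bZ/2\{a_{s-1}\}$, forcing $\pi_s(a_s)\in\{0,a_{s-1}\}$. Lemma~5.1 shows precisely that $\pi_s(a_s)\neq a_{s-1}$, leaving $\pi_s(a_s)=0$. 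Hence the inverse system $\{\bZ/2\{a_s\},\,\pi_s=0\}$ has trivial limit and this summand disappears. Summing the four contributions gives the asserted decomposition $\bZ_2\{1,r^{-1}(\rho^6\tau)\}\oplus\bZ/2\{r^{-1}(\rho^4)\}$.

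I expect the real obstacle to be the clean separation of the two $\bZ/2$-generators $r^{-1}(\rho^4)$ and $a_s$: at every finite level both are $2$-torsion of the same order, so nothing intrinsic to a single level tells them apart, and one must invoke their opposite behaviour under the mod-$2$ reduction—hence Lemma~5.1 for $a_s$—to see that only $r^{-1}(\rho^4)$ survives. Once this dichotomy is established the rest is routine: every transition map is surjective or zero, so the system is Mittag--Leffler and no $\lim^1$ correction appears, and the limit is computed summand by summand as above.
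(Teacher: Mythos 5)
Your proposal is correct and follows exactly the paper's route: the paper deduces the corollary from Theorem~5.2 by noting that the classes $a_s$ die in the inverse limit (Lemma~5.1, whose vertical map $r$ is precisely your transition map $\pi_s$), while the remaining generators survive. Your write-up merely makes explicit the summand-by-summand limit computation and the compatibility $r_{s-1}\circ\pi_s=r_s$ that the paper leaves implicit.
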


\section{general $n$}

We can consider the case $M_n$ for $n\ge 3$.
By the similar arguments, we can prove the following 
Theorem 6.1.

For example the arguments of the first parts of $\S 4$
can be changed as follows.
At first we consider them for $0\le m$ and $4m+2<2^{n+1}-2$.
We condider
\[H^{4m+1,4m+3}(M_n;\bZ/2)\stackrel{\delta}{\to}
 H^{4m+2,4m+3}(M_n;\bZ/2)
\stackrel{t}{\to}
H^{4m+2,4m+3}(M_n;\bZ/4)
\] \[\stackrel{r}{\to} 
 H^{4m+2,4m+3}(M_n;\bZ/2) 
\stackrel{\delta}{\to}
H^{4m+3,4m+3}(M_n;\bZ/2)... \]    
 Hence we see 
\[H^{4m+1,4m+3}(M_n;\bZ/2)\cong \bZ/2\{\rho^{4m+1}\tau^2\},\quad 
H^{4m+2,4m+3}(M_n;\bZ/2)\cong \bZ/2\{\rho^{4m+2}\tau\}.\]
 Thus we see that
$ H^{4m+2,4m+3}(M_n;\bZ/4)\cong \bZ/2\{a_2=t(\rho^{4m+2}\tau)\}.$
The $\bZ/2^s$-coefficients case also done similarly by lemma 5.1.

The next arguments for $H^{4m,4m}(M_n;\bZ/2^s)$ also work similarly. 

The last argument $H^{4m+2,4m+3}(M_n;\bZ/2^s)$ when $4m+2=2^{n+1}-2$ can be done similarly from the case $H^{6,7}(M;\bZ/2^s)$.

Writing $\pi=cl(c_0)$, we have the following theorem.
\begin{thm}
Let $M_n\subset Q^{2^n-1}$ be the Rost motive of the 
norm variety.  Then there are element 
$\pi\in H^{2^{n+1}-2}_{et}(M_n; \bZ_2(1))$ and 
$\bar \rho_{4m}\in
H^{4m}_{et}(M_n;\bZ_2)$ such that 
\[ H^{2*}_{et}(M_n;\bZ_2(*))\cong
\bZ_2\{1,\pi\}\oplus \bZ/2\{\bar \rho_4,
\bar \rho_8,...,\bar \rho_{2^{n+1}-4} \}\]
\[ \cong
\bZ_2\{1,\pi\}\oplus \bZ/2[\bar \rho_4]^+/(
\bar \rho_4^{2^{n-1}})
%\cong
%\bZ_2[\pi,\bar \rho_4]/(\pi^2,\pi\bar \rho_4,2\bar \rho_4,
%\bar \rho_4^{2^{n-1}}).
.\]
The image of the cycle map is given
\[CH^*(M_n)\otimes \bZ_2\cong
\bZ_2\{1,\pi\}\oplus \bZ/2\{\bar \rho_{2^{n+1}-2^{n}},
\bar \rho_{2^{n+1}-2^{n-1}},...,\bar \rho_{2^{n+1}-4} \} .\]
\end{thm}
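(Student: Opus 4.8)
The plan is to compute $H^{2*}_{et}(M_n;\bZ_2(*))$ one cohomological degree at a time. By the discussion of Section 3 the group splits as a sum of even-twist pieces $H^{4m}_{et}(M_n;\bZ_2)\cong H^{4m,4m}(M_n;\bZ_2)$ and odd-twist pieces $H^{4m+2}_{et}(M_n;\bZ_2(1))\cong H^{4m+2,4m+3}(M_n;\bZ_2)$, and each of these I would obtain as the inverse limit over $s$ of the corresponding motivic group with $\bZ/2^s$-coefficients, exactly as was done for $M_2$ in Sections 4 and 5. The inputs are the known mod-$2$ structure $H^{*,*'}(M_n;\bZ/2)\cong\bZ/2[\tau]\{1,\rho,\dots,\rho^N\}$ in the range $*\le*'$, with $N=2^{n+1}-2$, the Bockstein relations $\beta(\tau)=\rho$ and $\beta(\rho)=0$, and the resulting derivation formula $\beta(\rho^a\tau^b)=b\,\rho^{a+1}\tau^{b-1}$.

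First I would feed the mod-$2$ data into the Bockstein long exact sequence for $0\to\bZ/2^{s-1}\to\bZ/2^s\to\bZ/2\to0$ in three families of degrees. In the even-twist degree $(4m,4m)$ with $1\le m\le 2^{n-1}-1$ the incoming boundary $\beta(\rho^{4m-1}\tau)=\rho^{4m}$ is onto while $\beta(\rho^{4m})=0$, so the $\bZ/2^s$-group is $\bZ/2$ for every $s$ and passes to a $2$-torsion class $\bar\rho_{4m}=r^{-1}(\rho^{4m})$ in the limit. In an odd-twist degree $(4m+2,4m+3)$ below the top, i.e. with $4m+2<N$, one has $\beta(\rho^{4m+2}\tau)=\rho^{4m+3}\neq0$, and the argument of Lemma 5.1 shows the resulting class $a_s$ does not lift through the tower, so it contributes nothing in the limit. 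In the top odd-twist degree $(N,N+1)$ both boundaries vanish, since $\beta(\rho^{N-1}\tau^2)=0$ and $\beta(\rho^{N}\tau)=\rho^{N+1}=\rho^{2^{n+1}-1}=0$; the short exact sequences then split off a class that lifts all the way up the tower and assembles, as in the Remark of Section 5, into a single torsion-free summand $\bZ_2\{\pi\}$ with $\pi=r^{-1}(\rho^{N}\tau)$.

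Assembling the three families, and generalizing Theorem 5.2 and its Corollary 5.3, yields $H^{2*}_{et}(M_n;\bZ_2(*))\cong\bZ_2\{1,\pi\}\oplus\bZ/2\{\bar\rho_4,\dots,\bar\rho_{2^{n+1}-4}\}$. The multiplicative description follows because the reduction $r$ is a ring map that is injective on each $2$-torsion summand, so $r(\bar\rho_4^{\,m})=(\rho^4)^m=\rho^{4m}=r(\bar\rho_{4m})$ forces $\bar\rho_{4m}=\bar\rho_4^{\,m}$, while $\bar\rho_4^{\,2^{n-1}}=0$ because $\rho^{2^{n+1}}=0$; this identifies the torsion part with $\bZ/2[\bar\rho_4]^+/(\bar\rho_4^{2^{n-1}})$. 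For the image of the cycle map I would combine Rost's $CH^*(M_n)\otimes\bZ_2\cong\bZ_2\{1,c_0\}\oplus\bZ/2\{c_1,\dots,c_{n-1}\}$ with $cl(c_0)=\pi$ and the integral lift of the Section 2 formula $cl(c_i)=\rho^{2^{n+1}-2^{i+1}}$, namely $cl(c_i)=\bar\rho_{2^{n+1}-2^{i+1}}$ for $i\ge1$; since $2^{n+1}-2^{i+1}\equiv0\pmod4$ for $i\ge1$, these land precisely in the listed even-twist classes $\bar\rho_{2^{n+1}-2^{n}},\dots,\bar\rho_{2^{n+1}-4}$.

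The hard part will be the inverse-limit bookkeeping carried out uniformly in $s$ across all three families, since the mod-$2^{s-1}$ groups entering the Bockstein sequences are no longer described by the simple $\bZ/2[\tau]$ formula and the connecting maps must be tracked by induction on $s$. Concretely, for every below-top odd-twist degree one must repeat the commutative-diagram argument of Lemma 5.1 showing $r(a_s)\neq a_{s-1}$, so that $a_s$ genuinely dies, whereas at the top degree one must instead verify the non-$2$-torsion relation $2b_s=t(b_{s-1})\neq0$ of the Remark in Section 5, so that $\pi$ generates a free $\bZ_2$-summand rather than $2$-torsion.
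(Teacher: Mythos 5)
Your proposal is correct and follows essentially the same route as the paper's own proof of Theorem 6.1: the splitting into the twists $H^{4m,4m}$ and $H^{4m+2,4m+3}$, the three-family Bockstein analysis with $\bZ/2^s$-coefficients (below-top odd-twist classes $a_s$ killed by the Lemma 5.1 diagram argument, even-twist classes $\bar\rho_{4m}$ surviving as $2$-torsion, the top degree $(2^{n+1}-2,2^{n+1}-1)$ giving the free $\bZ_2\{\pi\}$ via the $b_s$ relation), the inverse limit, and the cycle-map image from Rost's $CH^*(M_n)$ with $cl(c_i)=\bar\rho_{2^{n+1}-2^{i+1}}$ and $\pi=cl(c_0)$. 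In fact you supply somewhat more detail (e.g.\ the ring-map argument identifying $\bar\rho_{4m}=\bar\rho_4^{\,m}$, and the explicit flagging of the $s$-induction) than the paper's sketch in Section 6.
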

\begin{cor}
The non-algebraic elements  are written
\[ H^{2*}(M_n;\bZ_2(*))/Im(cl)\cong
 \bZ/2\{\bar \rho_4,
\bar \rho_8,...,\bar \rho_{2^{n+1}-12} \}/\{\bar \rho_{2^{n+1}-2^{i+1}}|  2\le i \le n-1\} .\]
\end{cor}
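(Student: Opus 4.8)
Let me reconstruct what needs proving and plan the argument.

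The target is a complete computation of $H^{2*}_{et}(M_n;\bZ_2(*))$ for general $n$, together with identification of the image of the cycle map. The paper has already fully worked the case $n=2$ in §§4–5, so my plan is to run the $n=2$ machinery in parametrized form for general $n$ and then assemble the pieces.

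Let me sketch it.

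=== PROOF PROPOSAL ===

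The plan is to compute each graded piece $H^{2*}_{et}(M_n;\bZ_2(*))$ by the same three-step pattern already carried out for $n=2$ in \S\S 4--5, but now indexed by $m$. Recall from \S 2 that $H^{*,*'}(M_n;\bZ/2)\cong \bZ/2[\tau]\{1,\rho,\dots,\rho^N\}$ with $N=2^{n+1}-2$ in the range $*\le *'$, and that $\delta=\beta$ acts by $\beta(\tau)=\rho$. The degrees we must examine are exactly $H^{4m+2,4m+3}$, $H^{4m,4m}$, and the top piece $H^{2^{n+1}-2,2^{n+1}-1}$, since these are the groups assembling into $H^{2*}_{et}(M_n;\bZ_2(*))$ for $*$ odd and $*$ even respectively. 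For the generic middle degrees $4m+2<2^{n+1}-2$, the excerpt already records that the Bockstein $\delta(\rho^{4m+1}\tau^2)=0$ while $\delta(\rho^{4m+2}\tau)=\rho^{4m+3}\ne 0$, so the long exact sequence collapses to give $H^{4m+2,4m+3}(M_n;\bZ/4)\cong\bZ/2\{a_2\}$; Lemma 5.1 then shows these classes $a_s$ die in the inverse limit. Dually, the degrees $H^{4m,4m}$ contribute the genuinely $2$-torsion survivors $\bar\rho_{4m}=\delta(\rho^{4m-1}\tau)$, exactly the classes $r^{-1}(\rho^{4m})$ in the $\bZ/2^s$ computation.

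The key structural points I would verify carefully are: first, that the only degree behaving like the top piece $H^{6,7}$ of the $n=2$ case (where both $t$ and $r^{-1}$ survive to give a free $\bZ_2$ summand) is the single top degree $2^{n+1}-2$, since this is the unique even $c$ in range with $\rho^{c+1}=0$ forcing $\delta$ to vanish on $\rho^{c}\tau$; this produces the free generator $\pi=cl(c_0)$ and accounts for $\bZ_2\{1,\pi\}$. Second, that after taking $Lim_{\infty\gets s}$ the surviving $\bZ/2$-summands are precisely the $\bar\rho_{4m}$ with $4\le 4m\le 2^{n+1}-4$, giving the first displayed isomorphism; the ring description $\bZ/2[\bar\rho_4]^+/(\bar\rho_4^{2^{n-1}})$ then follows from $\bar\rho_4^j=\bar\rho_{4j}$ in the $\rho$-power multiplicative structure, with the relation forced by $\rho^{2^{n+1}-1}=0$.

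For the cycle-map image, I would invoke the $\bZ/2$-level result: from \S 2 the algebraic classes are $cl(c_i)=\rho^{2^{n+1}-2^{i+1}}$, and integrally $c_0$ is torsion-free (giving $\pi$) while $c_1,\dots,c_{n-1}$ are $2$-torsion (giving $\bar\rho_{2^{n+1}-2^{i+1}}$ for $1\le i\le n-1$). Reducing the degrees $2^{n+1}-2^{i+1}$ for $i\ge 1$ modulo $4$ confirms they are all $\equiv 0\pmod 4$ and lie in the asserted range, so the image is exactly $\bZ_2\{1,\pi\}\oplus\bZ/2\{\bar\rho_{2^{n+1}-2^n},\dots,\bar\rho_{2^{n+1}-4}\}$. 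Corollary 6.2 is then pure bookkeeping: subtract these algebraic indices from the full list $\{\bar\rho_4,\dots,\bar\rho_{2^{n+1}-4}\}$, and observe that $\bar\rho_{2^{n+1}-2^{i+1}}$ with $2\le i\le n-1$ are precisely the non-top algebraic generators, so the quotient is spanned by $\bar\rho_{4m}$ with $4m\le 2^{n+1}-12$ that avoid those indices.

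The main obstacle I anticipate is not any single computation but the \emph{uniform bookkeeping across the index $m$}: making sure that the three Bockstein patterns (vanishing, nonvanishing, and top-degree vanishing) are correctly attached to each residue class of $c$ modulo $4$ and that the boundary cases near $c=2^{n+1}-2$ are handled without off-by-one errors, so that the inverse-limit survivors match the claimed list exactly. I would therefore organize the argument by first establishing the generic $m$ behavior via Lemma 5.1 verbatim, then treating the finitely many edge degrees ($c\equiv 0\bmod 4$ survivors and the unique top degree) separately.
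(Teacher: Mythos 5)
Your proposal is correct and follows essentially the same route as the paper: Section 6 proves Theorem 6.1 by exactly your parametrized rerun of the \S\S 4--5 Bockstein computations (generic degrees $4m+2$ killed in the inverse limit via Lemma 5.1, the $4m$ degrees surviving as $\bar\rho_{4m}$, and the unique top degree $2^{n+1}-2$ giving the free summand $\bZ_2\{\pi\}$ with $\pi=cl(c_0)$), after which the corollary is the same degree bookkeeping you describe, using $cl(c_i)$ in degrees $2^{n+1}-2^{i+1}$. Your closing observation is also the right reading of the statement: since $2^{n+1}-4$ and $2^{n+1}-8$ are algebraic, the largest non-algebraic class is $\bar\rho_{2^{n+1}-12}$, and only the indices $i\ge 3$ actually remove elements from the displayed span.
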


\section{Pfister quadrics}

Let us write by $Q^d$ an anisotropic quadric so that
$M=M_n$ be the Rost motive of $Q^{2^n-1}$.  All anisotropic
quadrics over $\bR $  are excellent (see details $\S 3$ in [Ya1]), and its motive $M(Q^d)$ can be 
decomposed as follows.
\begin{lemma}(Rost [Ro], Lemma 3.2 in [Ya1])
  There is an isomorphism of motives
  \[M(Q^d)\cong \oplus_{i=0}^{r}M_{n_i}\otimes M(\bP^{m_i-1})
  \otimes \bT^{\otimes s_{i}}\]
where $\bT$ is the Tate motive so that $M(\bP^1)\cong \bT^0\oplus \bT$.
  \end{lemma}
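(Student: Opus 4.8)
The plan is to reduce the statement to Knebusch's theory of excellent quadratic forms together with Rost's decomposition of a Pfister neighbor quadric, and then to induct on $\dim Q^d$.

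First I would record the relevant classification. Over $\bR$ an anisotropic quadratic form is, after scaling, positive definite, so $Q^d$ is the quadric of $q_N=\langle 1^N\rangle$ with $N=d+2$. These definite forms are the basic examples of excellent forms. Concretely, if $2^{n-1}<N\le 2^n$ then $q_N$ is a Pfister neighbor of the $n$-fold Pfister form $\phi_n=\langle 1^{2^n}\rangle\cong\langle\langle -1,\dots,-1\rangle\rangle$, its first Witt index is $i_1=N-2^{n-1}$, and the next anisotropic kernel in the generic splitting tower is $-q_{2^n-N}=\langle(-1)^{2^n-N}\rangle$, of dimension $2^n-N$. The crucial point is that this complementary form is again a scalar multiple of a definite form, hence anisotropic over $\bR$ and excellent, and its quadric $Q^{d'}$ has strictly smaller dimension $d'=2^n-N-2$. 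Thus the splitting tower of $q_N$ never leaves the class of (scalar multiples of) definite forms, which is exactly what allows an induction on dimension.

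The geometric heart is Rost's theorem [Ro] that the motive of a Pfister neighbor quadric splits off the Rost motive of the associated symbol. Applied to $q_N$ it yields a decomposition
\[M(Q^d)\cong M_{n-1}\otimes M(\bP^{\,i_1-1})\ \oplus\ \bT^{\otimes i_1}\otimes M(Q^{d'}),\]
where $M_{n-1}$ is the Rost motive attached to $\phi_n$, the factor $M(\bP^{\,i_1-1})=\bigoplus_{j=0}^{i_1-1}\bT^{\otimes j}$ records the $i_1$ hyperbolic planes absorbed at the first step of the tower, and the Tate twist $\bT^{\otimes i_1}$ places the complementary summand in the correct codimension inside $Q^d$. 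A rank count of the split (geometric) motives confirms the consistency of this splitting: the $2i_1$ cells of $M_{n-1}\otimes M(\bP^{\,i_1-1})$ together with the cells of the twisted complementary quadric account for all cells of $M(\bar Q^d)$.

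I would then induct on $\dim Q^d$. The complementary quadric $Q^{d'}$ has strictly smaller dimension and its form $-q_{2^n-N}$ is again definite up to scalar, hence excellent, so by the induction hypothesis its motive decomposes as $\bigoplus_{i\ge 1}M_{n_i}\otimes M(\bP^{\,m_i-1})\otimes\bT^{\otimes s_i'}$. Substituting this into the second summand above and adding the twist $\bT^{\otimes i_1}$ to each resulting piece produces the asserted form $M(Q^d)\cong\bigoplus_{i=0}^{r}M_{n_i}\otimes M(\bP^{\,m_i-1})\otimes\bT^{\otimes s_i}$, with $(n_0,m_0,s_0)=(n-1,\,i_1,\,0)$ and the remaining data inherited from $Q^{d'}$. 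The main obstacle is the splitting statement itself, namely that $M_{n-1}$ is a direct summand of $M(Q^d)$ for the Pfister neighbor $q_N$: this rests on the nilpotence theorem for quadric motives and on Rost's special correspondences, and is the genuine content of [Ro], whereas, once the splitting pattern $N\mapsto 2^n-N$ and Rost's theorem are in hand, the bookkeeping of the exponents $m_i$ and the Tate twists $s_i$ is routine. I would therefore invoke [Ro] (equivalently Lemma 3.2 of [Ya1]) for the splitting and devote the written argument to assembling the induction.
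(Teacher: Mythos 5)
Your proposal is correct, and it reconstructs exactly the argument that lies behind the citation: the paper itself gives no proof of this lemma, quoting it from Rost [Ro] and Lemma 3.2 of [Ya1], and the proof in those sources is the one you give --- over $\bR$ every anisotropic form is a scalar multiple of $\langle 1^N\rangle$, hence excellent, so it is a Pfister neighbor of $\phi_n=\langle\langle -1,\dots,-1\rangle\rangle$ for $2^{n-1}<N\le 2^n$ with complementary form again definite up to sign, and Rost's neighbor splitting $M(Q^d)\cong M_{n-1}\otimes M(\bP^{i_1-1})\oplus \bT^{\otimes i_1}\otimes M(Q^{d'})$ feeds an induction on dimension. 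Two points of corroboration: your recursion $N\mapsto 2^n-N$ is precisely the source of the paper's formula $(*)$, $d+2=2^{n_0+1}-2^{n_1+1}+\dots+(-1)^r2^{n_r+1}$, which pins down the $n_i$; and your decomposition reproduces the paper's examples (e.g.\ for $d=3$: $i_1=1$, $d'=1$, giving $M(Q^3)\cong M_2\oplus M_1\otimes\bT$, as in \S 8). You are also right that the only genuinely nontrivial input is the splitting-off of the Rost motive for a Pfister neighbor, which both you and the paper delegate to [Ro]; your cell count ($2i_1$ cells from $M_{n-1}\otimes M(\bP^{i_1-1})$, matching the paper's convention that $M_n$ is the two-cell Rost motive of the $(n+1)$-fold Pfister form) checks out in both parities of $d$.
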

Here $n_i,m_i,s_i$ are  known when $d$ is given, while they are quite 
complicated.  Here we recall only $n_i$ which is defined from  
\[ (*)\quad d+2=2^{n_0+1}-2^{n_1+1}+...+(-1)^r2^{n_r+1}\quad
for \  n_0>n_1>,,,>n_r+1\ge -1.\]

 Let $M(Q^d)$ contain $M_n\otimes \bT^{j\otimes}$. 
Then for  each 
\[2+2j\le c\le 2^{n+1}-2+2j \quad  with \ \ 
c=0\ mod(4), \ \
c\not =2^{n+1}-2^{i+1}+2j,\] 
the cohomology $H_{et}^{c}(Q^{d};\bZ_2)$ 
has a non-algebraic element.
  The cohomology $H_{et}^{2*}(Q^d;\bZ_2)$
has a non-algebraic element if and only if in the above decomposition contains $M_n\otimes \bT^{j\otimes}$
 for some $n\ge 3$.

In fact, $M_{n_0}$,  $n_0\ge 3$ is contained in $M(Q^d)$
when  $d\ge 2^3-1$ from $(*)$ above.
\begin{thm}   The  cohomology 
$H_{et}^{2*}(Q^d;Z_2(*))$ 
has  a non-algebraic element
if and only if $d\ge 2^3-1=7$.
\end{thm}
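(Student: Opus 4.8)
The plan is to reduce the statement to the case of a single Rost motive, for which the non-algebraic classes were already pinned down in Corollary 6.2, and then to read off from the numerical expansion $(*)$ exactly when a summand $M_n$ with $n\ge 3$ occurs in the motivic decomposition of $Q^d$. First I would invoke the Rost decomposition $M(Q^d)\cong\bigoplus_{i=0}^{r}M_{n_i}\otimes M(\bP^{m_i-1})\otimes \bT^{\otimes s_i}$ of Lemma 7.1. Since $H_{et}^{2*}(-;\bZ_2(*))$ and $CH^*(-)\otimes\bZ_2$ both send a direct sum of motives to the direct sum of the pieces, and since the cycle map $cl$ is additive, it suffices to test non-algebraicity summand by summand.

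Next I would dispose of the cellular factors. For $M(\bP^{m-1})$ and $\bT^{\otimes s}$ the projective bundle formula holds in both the Chow ring and in $H_{et}^{2*}(-;\bZ_2(*))$, and on these motives $cl$ is an isomorphism, so every class is algebraic. Because $cl$ is compatible with the projective bundle and Tate-twist isomorphisms, the non-algebraic quotient of $M_n\otimes M(\bP^{m-1})\otimes\bT^{\otimes s}$ is the non-algebraic quotient of $M_n$ tensored with the entirely algebraic cohomology of the remaining cellular factors, merely shifted in degree according to the twist. Hence such a summand contributes a non-algebraic element to $H_{et}^{2*}(Q^d;\bZ_2(*))$ if and only if $M_n$ itself does; this is precisely the reduction recorded in the displayed discussion preceding the statement. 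By Corollary 6.2 the non-algebraic classes of $M_n$ are $\bar\rho_4,\bar\rho_8,\dots,\bar\rho_{2^{n+1}-12}$ (modulo the algebraic $\bar\rho_{2^{n+1}-2^{i+1}}$), and this family is nonempty exactly when $2^{n+1}-12\ge 4$, i.e.\ $2^{n+1}\ge 16$, i.e.\ $n\ge 3$. For $n\le 2$ the cycle map onto $M_n$ is surjective, as one sees directly from Theorem 6.1 (for $n=2$ the torsion part $\bZ/2\{\bar\rho_4\}$ already lies in $\Img(cl)$). Thus $M_n$ carries a non-algebraic element if and only if $n\ge 3$.

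It then remains to decide when some $n_i\ge 3$. As $n_0>n_1>\dots$ is strictly decreasing, this holds if and only if $n_0\ge 3$. In the alternating expansion $(*)$ the leading power $2^{n_0+1}$ is the least power of $2$ with $2^{n_0+1}\ge d+2$, equivalently $2^{n_0}<d+2\le 2^{n_0+1}$, so that $n_0+1=\lceil\log_2(d+2)\rceil$. Consequently $n_0\ge 3\iff d+2>2^3=8\iff d\ge 7$, while $d\le 6$ forces $d+2\le 8$, hence $n_0\le 2$ and every $n_i\le 2$. Combining this with the previous paragraph gives the equivalence: $H_{et}^{2*}(Q^d;\bZ_2(*))$ has a non-algebraic element $\iff$ the decomposition contains some $M_n$ with $n\ge 3$ $\iff d\ge 7$.

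I expect the main obstacle to be the middle reduction step: one must check carefully that tensoring a Rost motive with a cellular motive neither creates new non-algebraic classes nor destroys existing ones, i.e.\ that non-algebraicity is detected summand by summand and is preserved under the projective bundle and Tate-twist isomorphisms at the level of the cycle map. Once this compatibility of $cl$ with the structural isomorphisms is granted, the remaining input from $(*)$ is elementary, the only verifications being the two inequalities $2^{n+1}-12\ge 4\iff n\ge 3$ and $2^{n_0}<d+2\le 2^{n_0+1}$.
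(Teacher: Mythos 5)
Your proposal is correct and follows essentially the same route as the paper: the paper's ``proof'' is exactly the discussion preceding the theorem, which invokes the excellent-quadric decomposition of Lemma 7.1, detects non-algebraic classes summand by summand on the Rost motives $M_n$ via Theorem 6.1/Corollary 6.2 (non-empty precisely for $n\ge 3$), and reads off from $(*)$ that $M_{n_0}$ with $n_0\ge 3$ occurs iff $d\ge 2^3-1=7$. Your write-up merely makes explicit two points the paper leaves implicit, namely the compatibility of the cycle map with the Tate-twist shifts and the estimate $2^{n_0}<d+2\le 2^{n_0+1}$ forcing $n_0\ge 3\iff d\ge 7$.
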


For example let $Q^d$ be the minimal or the maximal
 Pfister neighbors , i.e, $d=2^n-1$, $2^{n+1}-3$.
Then we have ($\S 6$ in[Ya1])
\[M(Q^{2^n-1})\cong M_{n}\oplus M_{n-1}\otimes M(\tilde \bP^{2^{n-1}-1}),\quad
M(Q^{2^{n+1}-3})\cong M_n\otimes  M( \bP^{2^n-2})
\]
where $M(\bP^s)\cong \bT^{0\otimes} \oplus \bT\oplus...
\oplus \bT^{s\otimes}$, \ $M(\tilde \bP^s)\cong \bT\oplus...
\oplus \bT^{s\otimes}$.

{\bf Remak.}
The decomposition for the Pfister quadrics
$Q^{2^{n+1}-2}$ is given
by \[M(Q^{2^{n+1}-2})\cong M_n\otimes M( \bP^{2^n-1}).\]  The Chow rings of odd and even quadrics are similar but different types (see (1.1) in [Ya1]).
The Chow rings of odd quadrics are simpler.
So we write down mainly these (odd dimensial) neighbors, while we have similar results for even quadrics. We note
that the minimal neighbor is the norm variety.

\begin{thm}
Let $Q^d$ be  the minimal or the maximal Pfisters neighbor
(i.e., anisotropic form of $d=2^n-1$ or $2^{n+1}-3$). Then
for each $c=0\  mod(4)$ and $0<c< 2d-8$, the cohomology $H_{et}^c(Q^d;\bZ_2)$
has a non-algebraic element.
\end{thm}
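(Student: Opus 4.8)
The plan is to deduce the statement from the motivic decomposition in Lemma 7.1 together with the additivity of etale cohomology and of the cycle class map. If $M(Q^d)\cong\bigoplus_S S$ with each summand $S$ of the shape $M_m\otimes\bT^{\otimes j}$, then $H^c_{et}(Q^d;\bZ_2)\cong\bigoplus_S H^c_{et}(S;\bZ_2)$ and $\Img(cl)\cong\bigoplus_S\Img(cl|_S)$, so $H^c_{et}(Q^d;\bZ_2)$ contains a non-algebraic element as soon as one summand $S$ does. Hence it suffices, for each $c\equiv0\bmod4$ with $0<c<2d-8$, to produce a single Tate-twisted Rost summand $M_m\otimes\bT^{\otimes j}$ of $M(Q^d)$ carrying a non-algebraic class in degree $c$. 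For this I invoke the criterion stated just before Theorem 7.2: $M_m\otimes\bT^{\otimes j}$ contributes such a class exactly when $2+2j\le c\le2^{m+1}-2+2j$ and $c\neq2^{m+1}-2^{i+1}+2j$ for $1\le i\le m-1$. The whole argument thus collapses to a covering count over the pairs $(m,j)$ appearing in the decomposition.

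For the maximal neighbor $d=2^{n+1}-3$ one has $M(Q^d)\cong M_n\otimes M(\bP^{2^n-2})\cong\bigoplus_{j=0}^{2^n-2}M_n\otimes\bT^{\otimes j}$, so every summand is a twist of the single motive $M_n$ and $2d-8=2^{n+2}-14$. Fixing an admissible $c$, the range condition confines $j$ to the interval $[c/2-2^n+1,\,c/2-1]\cap[0,2^n-2]$, from which one deletes the excluded values $j=c/2-2^n+2^i$, $1\le i\le n-1$. I would verify that a valid $j$ always remains: in the interior the interval has length of order $2^n$, dwarfing the at most $n-1$ forbidden points, and only near the two ends $c\approx4$ and $c\approx2^{n+2}-16$ does it shrink, where a direct count (an interval of length at least $6$ meeting at most two forbidden points near the top, and none near the bottom) still leaves room.

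For the minimal neighbor $d=2^n-1$ the decomposition is $M(Q^d)\cong M_n\oplus M_{n-1}\otimes M(\tilde\bP^{2^{n-1}-1})\cong M_n\oplus\bigoplus_{j=1}^{2^{n-1}-1}M_{n-1}\otimes\bT^{\otimes j}$ and $2d-8=2^{n+1}-10$. The block $M_n$ (the case $j=0$) already supplies, via Corollary 6.2, a non-algebraic $\bar\rho_c$ for every $c\equiv0\bmod4$ in $(0,2^{n+1}-10)$ except at the algebraic degrees $c=2^{n+1}-2^{i+1}$, which within this range are exactly those with $3\le i\le n-1$. The task is to fill precisely these gaps from the second block. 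For $c=2^{n+1}-2^{i+1}$ the range condition for $M_{n-1}\otimes\bT^{\otimes j}$ forces $j$ into the interval $[2^{n-1}-2^i+1,\,2^{n-1}-1]$, which contains $2^i-1$ integers, while its forbidden values $j=2^{n-1}-2^i+2^{i'}$ with $1\le i'\le i-1$ number only $i-1$. Since $2^i-1>i-1$ for $i\ge3$, a valid twist $j$ survives and the corresponding summand carries a non-algebraic class in degree $c$; together with the $M_n$ block this covers every $c\equiv0\bmod4$ in $(0,2^{n+1}-10)$.

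The main obstacle is this covering count, and within it the minimal-neighbor case is the delicate one: one must confirm that the second block $M_{n-1}\otimes M(\tilde\bP^{2^{n-1}-1})$ reaches exactly the degrees $2^{n+1}-2^{i+1}$ at which $M_n$ is algebraic, and that after removing the algebraic exceptions of $M_{n-1}$ a legitimate twist remains. The boundary degrees, where the admissible $j$-intervals are shortest, are where the count is tightest; away from the boundary the abundance of Tate twists makes a good choice of $j$ immediate.
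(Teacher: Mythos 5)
Your overall strategy---decompose $M(Q^d)$ into twisted Rost summands, use additivity of the cycle map, and run a covering count over the admissible twists---is the same skeleton as the paper's proof, which however proceeds by exhibiting explicit elements ($x=\bar\rho_{2^n-2^{i+1}+4}\,h^{2^{n-1}-2}$ for the minimal neighbor, $x(4j,4m)=\bar\rho_4^{\,j}h^{2m}$ for the maximal one). But your counting has a genuine gap: the per-summand criterion you invoke is false for odd twists $j$. By Theorem 6.1 together with the Remark at the end of \S 3, the only torsion classes of $M_m$ living in the diagonal theory $H^{2*}_{et}(-;\bZ_2(*))$ are the $\bar\rho_{4m'}$, in degrees divisible by $4$; the degree-$(2\bmod 4)$, twist-$1$ groups vanish in the inverse limit (this is exactly Lemma 5.1, the non-existence of $a_s$), except for the top class $\pi$, which is algebraic. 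In the summand $M_m\otimes\bT^{\otimes j}$ these classes sit in total degree $4m'+2j$, so for a target degree $c\equiv 0\bmod 4$ only summands with $j$ \emph{even} can carry a non-algebraic class; an odd-$j$ summand contributes nothing (or only the algebraic $\pi h^j$). Note the paper's own criterion paragraph before Theorem 7.2 is stated with the same looseness, but its actual proof only ever uses even twists, while your count leans on the full integer intervals. Worse, every forbidden value you delete, $j=c/2-2^m+2^i$ with $i\ge 1$ and $c\equiv 0\bmod 4$, is itself even, so all deletions fall inside the genuinely usable half of your interval.

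This matters precisely in the tight cases your last paragraph flags. For the minimal neighbor at $c=2^{n+1}-2^{i+1}$, the interval $[2^{n-1}-2^i+1,\,2^{n-1}-1]$ has odd endpoints, hence only $2^{i-1}-1$ even twists, not $2^i-1$; subtracting the $i-1$ (all even) forbidden values leaves $2^{i-1}-i$, which for $i=3$ is exactly $1$---the unique survivor $j=2^{n-1}-2$, which is precisely the paper's explicit choice---whereas your count promises $2^3-1-2=5$ candidates, four of which are odd-twist phantoms. Similarly at the top of the maximal neighbor, $c=2^{n+2}-16$: your ``interval of length at least $6$ minus at most two forbidden points'' contains three even twists of which two are forbidden, leaving only $j=2^n-2$ (the paper's class $\bar\rho_{2^{n+1}-12}h^{2^{n+1}-4}$-type choice in its $i=2$ case). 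Since $2^{i-1}-i\ge 1$ for $i\ge 3$ and one even survivor remains at each boundary, the theorem is safe and your argument is repairable by restricting every count to even $j$; but as written, the criterion you rely on fails for half the summands you enumerate, your claimed margins are illusory in the extremal degrees, and your upgrade of the criterion to an ``exactly when'' is additionally unjustified (though unused).
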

 
 \begin{proof}[Proof for the minimal Pfister neighbor]

Let $Q=Q^{2^n-1}$.
At first,  we recall for   
$c\not = 2^{n+1}-2^{i+1}=|c_{i}|$, the cohommology $H^c_{et}(Q;\bZ_2) $ contains non-algebraic element 
 from   
 the direct summand $M_n$.

We will construct non-algebraic element 
\[x\in H^{2*}_{et}(M_{n-1}\otimes \tilde \bP^{2^{n-1}-1};\bZ_2)
\quad with\ \   |x|=|c_i| \ \  for \  i> 3. \]

The following elements are in $Im(cl)$
\[
c_i' h^{2^{n-1}-2} \in H^*(M_{n-1}\otimes \bT^{2^{n-1}-2}:\bZ_2)\quad |c_i'|=2^n-2^{i+1},\ |h|=2 \] 
where $c_i'  \in CH^*(M_{n-1})$
and $h\in CH^1(\bT)$.
We consider the degree
\[ |c_i'h^{2^{n-1}-2}\bar \rho_4|=2^n-2^{i+1}+2^n-4+4=2^{n+1}-2^{i+1}=|c_i|.\]

Let $x'=\bar \rho_{2^n-2^{i+1}+4}$ such that $|x'|=|c_i'\bar \rho_4|.$
Then $x'$  is not in $Im(cl)$
when $i>3$,  from \[ |x'|=|c_i'\bar \rho_4|=2^n-2^{i+1}+4\not =2^n-2^{k+1}=|c_k'|.\]

Take $x=x'h^{2^{n-1}-2}$.  Then it is non-algebraic and 
$|x|=|c_i|.$
\end{proof}
\begin{proof}[Proof for the maximal neighbor]
We give the non-algebraic elements only when $c=0\  mod(4)$.
Let $Q=Q^{2^{n+1}-3}$.  Hence its cohomology is 
\[H^*(Q;\bZ_2)\cong H^*(M;\bZ_2)\otimes \bZ_2[h]/(h^{2^n-1}).\]

Let us write $x(4j,4m)=\bar \rho_4^jh^{2m}$.
Given $y=y(i,m)=c_ih^{2m}$, we want to find non-algebraic element
$x(4j,4m')$ such that $|y|=|x|$.

Given $y(i,m)=c_ih^{2m}$ with $i\not= 2$, take $x$ as follows
\[ x=\begin{cases} x(2^{n+1}-2^{i+1}+4, 4m-4)\quad if \ m>0\\
                         x(4,2^{n+1}-2^{i+1}-4) \quad if\ \ m=0.
\end{cases} \]
Then $x$ is non-algebraic and $|y|=|x|$.

Next let $i=2$ in $y(i,4m).$  Take 
\[ x=x(2^{n+1}-2^3-4, 4m+4)\quad for\  
4m+4\le2^{n+1}-4=|\bP^{2^n-2}| .\]
Thus we get non-algebraic element $x$ for all 
$y$ with
\[ |y|\le 2^{n+1}-2^3-4+2^{n+1}-4.\]
Since $2d=2^{n+1}-2+2^{n+1}-4$, we get the result.
\end{proof}

\section{ring structures in the cases $n=2.3$}

Let $X=Q^3=Q^{2^2-1}\supset M=M_2$.
Recall that $M(X)\cong M_2\oplus M_{1}\otimes \bT$.  Hence
the cohomology $H^{2*}_{et}(Q^3;\bZ_2(*))$ is isomorphic to the Chow ring
\[ CH^*(X)\cong \bZ_2\{1,c_0\}\oplus \bZ/2\{c_1\}
\oplus \bZ_2\{h,c_0'h\}.\]
The ring structure is also known ($\S 6$ in [Ya1]) identifying $h^3=c_0, h^2= c_0'h$
\[ \bZ_2[h]/(h^4)\oplus \bZ/2\{c_1\} \quad with \  c_1h=c_1^2=0, \ |c_1|=4.\]
\begin{lemma}  We have  ring isomorphisms 
\[ H^{2*}_{et}(Q^3;\bZ_2(*))
\cong \bZ_2[h,c_1]/(h^4, 2c_1, c_1h, c_1^2),\]
\[ H^{2*}_{et}(Q^5;\bZ_2(*))
\cong \bZ_2[h,c_1]/(h^6, 2c_1, c_1h^3, c_1^2),\]
\[ H^{2*}_{et}(Q^6;\bZ_2(*))
\cong \bZ_2[h,c_1,c_0]/(h^7, 2c_1, c_1h^4, c_1^2, hc_0-h^{4},
c_0c_1,c_0^2).\]
\end{lemma}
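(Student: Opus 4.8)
The plan is to reduce the étale ring to the Chow ring and then transport the multiplicative structure computed geometrically in [Ya1]. The decisive first step is that, for $d=3,5,6$, the cycle map
\[ cl:CH^*(Q^d)\otimes\bZ_2\longrightarrow H^{2*}_{et}(Q^d;\bZ_2(*)) \]
is a ring isomorphism. It is surjective by Theorem 1.1, since $d<7$ forces the absence of non-algebraic elements, so that every class lies in $\Img(cl)$; and it is injective because, by Theorem 6.1, $cl$ is already injective on every Rost motive occurring here, the only ones being $M_1$ and $M_2$ (with their Tate twists) in the decompositions $M(Q^3)\cong M_2\oplus M_1\otimes\bT$, $M(Q^5)\cong M_2\otimes M(\bP^2)$ and $M(Q^6)\cong M_2\otimes M(\bP^3)$ of Lemma 7.1. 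As $cl$ is a ring homomorphism, it therefore suffices to present $CH^*(Q^d)\otimes\bZ_2$ as a ring.

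First I would record the additive structure from Lemma 7.1 together with $CH^*(M_2)\cong\bZ\{1,c_0\}\oplus\bZ/2\{c_1\}$ ($|c_1|=4$, $|c_0|=6$ in cohomological degree), naming the geometric hyperplane class $h$ with $|h|=2$. The free part is then spanned by powers of $h$ and, in the even case $Q^6$, by the independent middle class $c_0\in CH^3$, while the torsion part is spanned by $c_1$ and its $h$-multiples. Several relations are forced at once: the nilpotency $h^{d+1}=0$ (that is $h^4$, $h^6$, $h^7$) is the dimension relation $CH^{>d}(Q^d)=0$; the relation $2c_1=0$ is the $2$-torsion of $\bar\rho_4$; and each of $c_1h=0$ (for $Q^3$), $c_1h^3=0$ (for $Q^5$), $c_1h^4=0$ (for $Q^6$) holds because the product is a torsion class landing in the top Chow group $CH^d$, which the decomposition exhibits as torsion-free. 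For $Q^3$ also $c_1^2=0$ is immediate, since $c_1^2\in CH^4(Q^3)=0$.

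The remaining, genuinely multiplicative relations I would take from the explicit quadric Chow-ring computation of [Ya1]. These are the wrap-around identification of the top hyperplane powers with the free Rost generator, namely $h^{2^n-1}=h^3=c_0$ for the odd neighbours $Q^3$ and $Q^5$ and its even form $hc_0=h^4$ for $Q^6$; together with the self-intersection relations $c_1^2=0$ (now for $Q^5$, $Q^6$, where $CH^4$ still carries torsion) and, for $Q^6$, $c_0c_1=0$ and $c_0^2=0$. Conceptually the vanishing of the $c_1$-self-intersections reflects the truncation $\rho^{2^{n+1}-1}=\rho^7=0$ in the mod $2$ cohomology of $M_2$, but the precise integral statements rest on [Ya1]. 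For $Q^3$ the desired ring is exactly $\bZ_2[h]/(h^4)\oplus\bZ/2\{c_1\}$ as quoted from [Ya1] just above, whose rewriting yields the stated presentation; for $Q^5$ and $Q^6$ I would extract the corresponding presentations for the maximal and even Pfister neighbours from the same source.

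The hard part is precisely this last input, and its difficulty is conceptual: the decomposition of Lemma 7.1 is an isomorphism of motives but \emph{not} of rings, so it does not itself compute products. The formal generator of the projective factor $M(\bP^{2^n-2})$ is not the geometric hyperplane $h$; the former would give $h^{2^n-1}=0$, whereas the quadric hyperplane is nilpotent only of the dimension order $d+1$, its top powers wrapping into $c_0$ (for $Q^5$, $h^3=c_0\neq0$ rather than $0$). Establishing this wrap-around, and in the even case the delicate pair $hc_0=h^4$, $c_0^2=0$ that reflects the two middle-dimensional rulings of $Q^6$, is what cannot be read off from the motivic splitting, and is where the geometric computation of [Ya1] is indispensable.
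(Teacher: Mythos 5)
Your proposal is correct and follows essentially the same route as the paper: identify $H^{2*}_{et}(Q^d;\bZ_2(*))$ with $CH^*(Q^d)\otimes\bZ_2$ (possible since $d<7$), read off the additive structure from the motivic decompositions of Lemma 7.1, and import the multiplicative identifications ($h^3=c_0$, $h^2=c_0'h$, and the $Q^6$ relations $hc_0=h^4$, $c_0^2=c_0c_1=0$) from \S 6 of [Ya1]. You are in fact more careful than the paper in justifying the cycle-map isomorphism via Theorems 1.1 and 6.1 and in flagging that the motivic splitting is not a ring isomorphism, but the substance and the essential external input are the same.
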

\begin{proof}
We see the proof for $Q^5$. From the decomposition of
the motive, we see (additively)
\[CH^*(Q^5)\otimes Z_2 \cong
(\bZ_2\{1,c_0\}\oplus \bZ/2\{c_1\})\otimes \bZ_2\{1,h,h^2\}
.\]
The result follows from  
$\bZ_2\{c_0,c_0h,c_0h^2\}\cong \bZ_2\{h^3,h^4,h^5\}.$

For the proof of the Pfister quadric $Q^6$, see $\S 6$ in [Ya1].
\end{proof}
Next we consider the case that a non-algebraic element exists.
\begin{lemma}  We have a  ring isomorphism 
\[ H^{2*}_{et}(Q^7;\bZ_2(*))
\cong 
\bZ_2[h]/(h^8)
\oplus \bZ/2[h]/(h^4)\{\bar \rho_4\}\otimes \bZ/2\{\bar \rho_4^2,\bar \rho_4^3\}\]
\[  \cong \bZ_2[h,\bar \rho_4]/(h^8, 2\bar \rho_4,  
h^4\bar \rho_4, h\bar \rho_4^2, \bar \rho_4^4)\]
where $h^7=c_0=\pi, \ c_1=\bar \rho_4^3, \ c_2=\bar \rho_4^2$
and $c_1'h=h\bar \rho_4$.
Hence we have 
\[H^{2*}_{et}(Q^7;\bZ_2(*))/(Im(cl)\cong \bZ/2\{\bar \rho_4\}.\]
\end{lemma}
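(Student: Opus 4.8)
The plan is to assemble the ring from its motivic summands, handling the free part and the cycle image via the known Chow ring and reserving the real work for the products of the single non-algebraic class. By Lemma 7.1 with $n=3$ we have $M(Q^7)\cong M_3\oplus M_2\otimes M(\tilde\bP^3)$ and $M(\tilde\bP^3)\cong\bT\oplus\bT^2\oplus\bT^3$, so additively
\[ H^{2*}_{et}(Q^7;\bZ_2(*))\cong H^{2*}_{et}(M_3;\bZ_2(*))\oplus\bigoplus_{j=1}^{3}H^{2*-2j}_{et}(M_2;\bZ_2(*))\,h^j. \]
Feeding in Theorem 6.1 for $M_3$ (free $\bZ_2\{1,\pi\}$, $|\pi|=14$; torsion $\bZ/2[\bar\rho_4]^+/(\bar\rho_4^4)$) and Corollary 5.3 for $M_2$ (free $\bZ_2\{1,\pi_2\}$, $|\pi_2|=6$; torsion the algebraic class $c_1'$, $|c_1'|=4$) and sorting by degree gives one free $\bZ_2$ in each even degree $0,\dots,14$ together with the six torsion classes $\bar\rho_4,\,c_1'h,\,\bar\rho_4^2,\,c_1'h^2,\,c_1'h^3,\,\bar\rho_4^3$, which is the additive shape asserted.

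For the free summand I would quote the classical ring structure of $CH^*(Q^7)$ ($\S6$ of [Ya1]): by Theorem 1.2 the torsion-free quotient is algebraic and equals $CH^*(Q^7)\otimes\bZ_2$ modulo torsion, namely $\bZ_2[h]/(h^8)$ on the hyperplane class with $h^8=0$ (as $CH^8(Q^7)=0$) and $h^7=c_0=\pi$. Reading this off summand by summand records $h^4=\pi_2h$, $h^5=\pi_2h^2$, $h^6=\pi_2h^3$, so the only free-generator relation still needing justification is $h^8=0$, which is the dimension bound.

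The substance is the multiplicative behaviour of $\bar\rho_4$, and the mechanism I would use is twofold. First, the free part of $H^{2*}_{et}(Q^7;\bZ_2)$ is a free $\bZ_2$-module, so $2\,H^{2*}_{et}(Q^7;\bZ_2)$ is torsion-free and reduction $r$ is injective on the $2$-torsion subgroup; hence a $2$-torsion class is determined by its mod $2$ image, and one with $r=0$ vanishes. Second, $\rho$ is pulled back from $\mathrm{Spec}\,\bR$, so multiplication by $\rho$ preserves each motivic summand and acts on $M_2\otimes\bT^j$ exactly as on $M_2$, where it is nilpotent already at $\rho^{7}=0$, whereas on $M_3$ only $\rho^{15}=0$. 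Since $r$ is multiplicative with $r(\bar\rho_4)=\rho^4$ and $r(h^j)=\bar h^{\,j}$, I compute for $j=1,2,3$ that $r(h^j\bar\rho_4)=\bar h^{\,j}\rho^4=\rho^4h^j\neq0$ in $M_2\otimes\bT^j$ (as $4\le6$), matching $r(c_1'h^j)$; by injectivity on $2$-torsion this gives the identity $h^j\bar\rho_4=c_1'h^j$ (in particular $c_1'h=h\bar\rho_4$). The two vanishing relations come out the same way: $r(h^4\bar\rho_4)=r(\pi_2h)\rho^4=\rho^6h\cdot\rho^4=\rho^{10}h=0$ and $r(h\bar\rho_4^2)=\bar h\,\rho^8=\rho^8h=0$, both because $\rho^7=0$ on $M_2$, whence $h^4\bar\rho_4=0$ and $h\bar\rho_4^2=0$. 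The surviving powers are read in $M_3$: $r(\bar\rho_4^2)=\rho^8\neq0$ and $r(\bar\rho_4^3)=\rho^{12}\neq0$ identify $\bar\rho_4^2=c_2$, $\bar\rho_4^3=c_1$, while $\bar\rho_4^4=0$ by Theorem 6.1. I expect this cross-summand bookkeeping to be the main obstacle: $h$-multiplication genuinely mixes summands (e.g.\ $h\cdot\pi_2h^3=\pi\in M_3$), so the products $h^j\bar\rho_4$ cannot be separated from it without the observation that the global class $\rho$ respects the decomposition, and it is the contrast between the $\rho$-nilpotency of $M_2$ and of $M_3$ that decides which products vanish.

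Assembling these relations yields the presentation $\bZ_2[h,\bar\rho_4]/(h^8,2\bar\rho_4,h^4\bar\rho_4,h\bar\rho_4^2,\bar\rho_4^4)$, and counting monomials against the additive answer confirms there are no further relations. Finally $Im(cl)=CH^*(Q^7)\otimes\bZ_2$ is, under the identifications above, all of the free part $\bZ_2[h]/(h^8)$ together with the five torsion classes $c_1=\bar\rho_4^3$, $c_2=\bar\rho_4^2$, $c_1'h$, $c_1'h^2$, $c_1'h^3$; since $CH^2(Q^7)$ is torsion-free the degree-$4$ class $\bar\rho_4$ is not in the image, so $H^{2*}_{et}(Q^7;\bZ_2(*))/Im(cl)\cong\bZ/2\{\bar\rho_4\}$.
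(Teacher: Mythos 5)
Your skeleton is the same as the paper's: the decomposition $M(Q^7)\cong M_3\oplus M_2\otimes M(\tilde \bP^3)$, the additive assembly from Theorem 6.1 and Corollary 5.3, and the absorption of $\bZ_2\{c_0'h,c_0'h^2,c_0'h^3\}$ into $\bZ_2\{h^4,h^5,h^6\}$. Where you diverge is the multiplicative step: the paper simply imports the ring structure of $CH^*(Q^7)$ from Theorem 1.1 of [Ya1] and then declares the identifications $c_2=\bar\rho_4^2$, $c_1=\bar\rho_4^3$, $c_1'h=h\bar\rho_4$, whereas you try to derive all torsion products from mod $2$ reduction, injectivity of $r$ on the $2$-torsion subgroup, and the contrast $\rho^7=0$ on $M_2$ versus $\rho^{15}=0$ on $M_3$. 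For the pure $M_3$ products this is actually an improvement on the paper's one-line assertion: since $1\in H^0$ lies in the $M_3$ summand and $\rho$-multiplication preserves motivic summands, $r(\bar\rho_4^k)=\rho^{4k}$ is computed entirely inside $M_3$, and torsion-injectivity cleanly pins $\bar\rho_4^2=cl(c_2)$, $\bar\rho_4^3=cl(c_1)$, $\bar\rho_4^4=0$.

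The gap is in the mixed products, exactly where you replace the actual class $h^4$ by its decomposition-adapted name $\pi_2h$. The identification of the free part holds only modulo torsion: in degree $8$ one has $h^4=\pi_2h+\epsilon_1\bar\rho_4^2+\epsilon_2\,c_1'h^2$ with $\epsilon_i\in\{0,1\}$ a priori undetermined, because $h^4$ and $\pi_2h$ agree only after restriction to $\bar Q^7$ and the kernel of $res$ in that degree is $\bZ/2\{\bar\rho_4^2,c_1'h^2\}$. Your computation $r(h^4\bar\rho_4)=r(\pi_2h)\rho^4=\rho^{10}h=0$ silently sets $\epsilon_1=0$; carrying the correction along gives $r(h^4\bar\rho_4)=\epsilon_1\rho^{12}$, which is precisely the reduction of the torsion class $\bar\rho_4^3$, so torsion-injectivity leaves $h^4\bar\rho_4\in\{0,\bar\rho_4^3\}$ undecided, and the two options give non-isomorphic presentations. (The analogous worry is harmless in lower degrees, though you should say why: $CH^1(Q^7)$ and $CH^2(Q^7)$ are torsion free and concentrated in the summands $M_2\otimes\bT^j$, so $h$ and $h^2$ are pure classes, which rescues $h^j\bar\rho_4=c_1'h^j$ and $h\bar\rho_4^2=0$; the possible $c_1'h$-component of $h^3$ is killed by $\rho^4$ since $\rho^8=0$ on $M_2$. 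Your stated mechanism — that $\rho$ acts summandwise — does not by itself control these cross-terms.) To close the remaining case you need exactly the input the paper cites, the torsion part of the multiplicative structure of $CH^*(Q^7)$ in Theorem 1.1 of [Ya1], which rules out a $c_2$-component in $h^4$; without it the relation $h^4\bar\rho_4=0$ in your presentation is unproven. The rest of your argument, including the final identification of $Im(cl)$ and the quotient $\bZ/2\{\bar\rho_4\}$, is correct and agrees with the paper.
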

\begin{proof}
From the decomposition of
the motive, we see (additively)
\[ H^{2*}_{et}(Q^7;\bZ_2(*))\cong 
H^{2*}(M_3;\bZ_2(*))\oplus H^{2*}(M_2;\bZ_2(*))\otimes \bZ_2\{h,h^2,h^3\}
.\]
Hence it can be written (with $ |c_0|=14$, $|c_1|=12$, 
$|c_2|=8,$ 
$|c_0'|=6$, $|c_1'|=4$) 
\[ \bZ_2\{1,c_0\}\oplus \bZ/2\{\bar \rho_4,c_1,c_2\}
\oplus (\bZ_2\{1,c_0'\}\oplus \bZ/2\{c_1'\})\otimes 
\bZ_2\{h,h^2,h^3\}.\]

The multiplicative structure of the Chow ring (but not the etale cohomology) is known in (Theorem 1.1) in [Ya1].
it is generated as a ring by elements $h$,$c_1,c_2$ and some element 
$u_1$ related as $c_1'$.  First note 
\[\bZ_2\{c_0'h,c_0'h^2,c_0'h^3\}\cong \bZ_2\{h^4,h^5,h^6\}. \]
Thus we have 
\[ \bZ_2\{1,h,...,h^7\}\cong \bZ_2\{1,h,h^2,h^3,hc_0',h^2c_0',h^3c_0',c_0\}.\]
So we have the above $H^{2*}(Q^7;\bZ_2(*))$ is isomorphic to
\[ \bZ_2[h]/(h^8)\oplus \bZ/2\{\bar \rho_4,c_2,c_1\}\oplus
\bZ_2\{c_1'h,c_1'h^2,c_1'h^3\}.\]
Taking $c_2=\bar \rho_4^2, \ c_1=\bar \rho_4^3,\ hc_1'=h\bar \rho_4$,
we have the result.
\end{proof}

By induction on $n$, the following corollary is easily seen.
\begin{cor}
 We have a  ring isomorphism 
\[ H^{2*}_{et}(Q^{2^n-1};\bZ_2(*))
\cong \bZ_2[h,\bar \rho_4]/(h^{2^n}, 2\bar \rho_4, h\bar \rho_4^{2^{n-2}}, \bar \rho_4h^{2^{n-1}},
 \bar \rho_4^{2^{n-1}}).\]
\end{cor}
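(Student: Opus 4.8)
The plan is to prove the isomorphism by induction on $n$, the case $n=3$ (that is, $Q^7$) having already been settled in the lemma above. Write $R_n$ for the ring on the right-hand side. Throughout I would use the motivic decomposition of the minimal Pfister neighbour,
\[ M(Q^{2^n-1})\cong M_n\oplus M_{n-1}\otimes M(\tilde \bP^{2^{n-1}-1}),\]
which yields the additive isomorphism
\[ H^{2*}_{et}(Q^{2^n-1};\bZ_2(*))\cong H^{2*}_{et}(M_n;\bZ_2(*))\oplus H^{2*}_{et}(M_{n-1};\bZ_2(*))\otimes \bZ_2\{h,\dots,h^{2^{n-1}-1}\},\]
the two Rost-motive factors being known as rings from the computation of $H^{2*}_{et}(M_k;\bZ_2(*))$. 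The strategy is then to define a ring homomorphism $\phi\colon R_n\to H^{2*}_{et}(Q^{2^n-1};\bZ_2(*))$ sending $h$ to the hyperplane class and $\bar \rho_4$ to the global class $\delta(\tau\rho^3)$, and to show that $\phi$ is well defined, surjective, and bijective. The last point will follow from a rank count, since both sides have free $\bZ_2$-rank $2^n$ and $\bZ/2$-torsion rank $(2^{n-1}-1)2^{n-2}$.

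For the free (algebraic) part I would argue that the powers of $h$ exhaust the torsion-free generators. Using the multiplicative structure of the Chow ring of the minimal Pfister neighbour [Ya1], one has the ``jump'' relations $h^{2^{n-1}+i}=\pi_{n-1}h^{i+1}$ for $0\le i\le 2^{n-1}-2$ together with $h^{2^n-1}=\pi_n$, which identify the free generators coming from the two summands (namely $1,\pi_n$ from $M_n$, and $h^j,\pi_{n-1}h^j$ from $M_{n-1}\otimes\tilde \bP$) with $1,h,\dots,h^{2^n-1}$. Since the top degree of $Q^{2^n-1}$ is $2^{n+1}-2$, one gets $h^{2^n}=0$, so the free part is $\bZ_2[h]/(h^{2^n})$.

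For the torsion I would use that every class $\bar \rho_4^a$, and every product of a torsion class with anything, is killed by $2$; such exponent-$2$ torsion injects under the reduction $r\colon H^{*}_{et}(-;\bZ_2)\to H^*_{et}(-;\bZ/2)$ (there is no $\bZ/4$, as the $\bZ/2^s$-computation shows), so all torsion products can be read off in the mod-$2$ ring via $r(\bar \rho_4)=\rho^4$ and $r(h)=\bar h$. The relation $\bar \rho_4^{2^{n-1}}=0$ is immediate from the top degree, as $4\cdot2^{n-1}=2^{n+1}>2^{n+1}-2$. The two remaining relations come from the inductive knowledge of $H^{2*}_{et}(M_{n-1};\bZ_2(*))$: multiplication by $h$ carries the $M_n$-torsion $\bar \rho_4^m$ to the $M_{n-1}\otimes\tilde \bP$-torsion $(\bar \rho_4')^m h$, which vanishes once $m\ge 2^{n-2}$ because $(\bar \rho_4')^{2^{n-2}}=0$ in $M_{n-1}$, giving $h\bar \rho_4^{2^{n-2}}=0$; and combining $h^{2^{n-1}}=\pi_{n-1}h$ with $\pi_{n-1}\bar \rho_4'=0$ gives $\bar \rho_4 h^{2^{n-1}}=0$. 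The same intertwining shows that the surviving products $\bar \rho_4^m h^j$ are precisely the motivic torsion generators, so $\phi$ is surjective.

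The hard part will be exactly this last intertwining. The motivic splitting $M_n\oplus M_{n-1}\otimes\tilde \bP$ is \emph{not} a splitting of rings, so a priori $\bar \rho_4\cdot h$, and more generally $\bar \rho_4^m\cdot h^j$, may mix the two summands; the content of the argument is to show that multiplication by $h$ sends the global $\bar \rho_4$ to the $\bar \rho_4'$ of $M_{n-1}$. I expect to verify this, and hence to pin down all torsion products, by computing in the mod-$2$ ring $H^*_{et}(Q^{2^n-1};\bZ/2)$ — where each Rost piece is the truncated polynomial ring $\bZ/2[\rho]/(\rho^{2^{k+1}-1})$ — and transporting the answer back through the injectivity of $r$ on exponent-$2$ torsion. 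Once these products are controlled, the rank count closes the induction.
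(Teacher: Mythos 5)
Your proposal follows the paper's own route: the same motivic decomposition $M(Q^{2^n-1})\cong M_n\oplus M_{n-1}\otimes M(\tilde \bP^{2^{n-1}-1})$, the same identification of the torsion-free part via the jump relations $\bZ_2\{c_0'h,\dots,c_0'h^{2^{n-1}-1}\}\cong \bZ_2\{h^{2^{n-1}},\dots,h^{2^n-2}\}$ and $h^{2^n-1}=\pi_n$, and the same derivation of the relations $\bar\rho_4^{2^{n-1}}=0$, $h\bar\rho_4^{2^{n-2}}=0$, $\bar\rho_4 h^{2^{n-1}}=0$ from top-degree vanishing in each Rost factor (the paper's $\bar\rho_{2^{n+1}}=0$, $\bar\rho_{2^n}h=0$). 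The additional scaffolding you supply --- the rank count, and the reduction to the mod $2$ ring via injectivity of $r$ on exponent-$2$ torsion to control how multiplication mixes the two summands --- merely makes rigorous what the paper's terse ``by induction on $n$, easily seen'' proof leaves implicit, so this is essentially the same argument.
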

\begin{proof}
From the decomposition of
the motive, we see (additively)
\[ H^{2*}_{et}(Q^{2^n-1};\bZ_2(*))\cong 
H^{2*}(M_n;\bZ_2(*))\oplus H^{2*}(M_{n-1};\bZ_2(*))\otimes \bZ_2\{h,...,h^{2^{n-1}-1}\}
.\]
Hence it can be written 
\[ \bZ_2\{1,c_0\}\oplus \bZ/2\{\bar \rho_4,...,,\bar \rho_{2^{n+1}-4}\} \] \[
\oplus (\bZ_2\{1,c_0'\}\oplus \bZ/2\{\bar \rho_4,...,\bar \rho _{2^n-4}\})\otimes 
\bZ_2\{h,...h^{2^{n-1}-1}\}.\]
Using  the identifying 
$\bZ_2\{c_0'h,..., c_0'h^{2^{n-1}-1}\}\cong \bZ_2\{h^{2^{n-1}},...,
h^{2^{n}-2}\}$,
the above cohomology is rewritten
\[ \bZ_2[h]/(h^{2^n})\oplus 
 \bZ/2\{\bar \rho_4,...,,\bar \rho_{2^{n+1}-4}\} 
\oplus ( \bZ/2\{\bar \rho_4,...,\bar \rho _{2^n-4}\})\otimes 
\bZ_2\{h,...h^{2^{n-1}-1}\}.\]
Relations $\bar\rho_{2^{n+1}}=0$, $\bar \rho_{2^n}h=0$,
imply $\bar \rho_4^{2^{n-1}}=0$, $\bar \rho_4^{2^{n-2}}h=0$,
and $\bar \rho_4h^{2^{n-1}}=0$.
\end{proof}

\section{non-split flag variety}

We study another (quite different) variety $X$ over $\bR$ which has the decomposition
\[M(X)\cong \sum_{,j} \lambda(j)M_2\otimes \bT^{j\otimes}.\]
Let $G=G_2$ the (split) simple exceptional group of $rank=2$,
Let $T$ be the maximal torus and $B$ is the Borel subgroup
containing $T$. The cohomology [Mi-To], [Ya3]
\[ H^*(G/T;\bZ/2)\cong 
S(t)/(b_1,b_2)\otimes \bZ/2\{1,y\} \]
\[where\quad S(t)=\bZ/2[t_1,t_2],\quad b_1=t_1^2+t_1t_2+t_2^2,\ 
b_2=t_2^3\]
with $|t_i|=2$, $|y|=6$.  The elements $t_i$ are represented by the $1$st Chern class.

We are interested the non-split version.  Let $\bG$ be a non
trivial $G$-torsor and let $X=\bG/B$ flag variety by $B$. 
Its etale cohomology is written [Ya3]
\[ grH^*_{et}(X;\bZ/2)\cong S(t)/(b_1,b_2)\otimes \bZ/2\{1,\rho,...,\rho^6\}.\]
We have relation $b_1=\rho^4$,  $b_2=\rho^6$ in 
$H^*_{et}(X;\bZ/2)$. Hence the Chow ring is written
(additively)
\[ CH^*(X)/2\cong S(t)/(b_1,b_2)\otimes \bZ/2
\{1,\rho^4,\rho^6\} \]
\[\cong S(t)/(b_1,b_2)\otimes \bZ/2
\{1, b_1,b_2\}
 \cong S(t)/(2,b_1^2,b_2^2,b_1b_2).\]

\begin{thm}  Let $X=\bG/B$  be the non-split flag variety for the simple  exceptional group $G=G_2$ over $\bR$. Then we have 
the ring isomorphism\ \ \[ H^{2*}_{et}(X;\bZ_2(*))
\cong \bZ_2\otimes S(t)/(2b_1,b_1^2,b_2^2,b_1b_2).\]
Of course, all elements are algebraic. 
\end{thm}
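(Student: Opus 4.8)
The plan is to follow the strategy of $\S 8$: extract the additive structure from the motivic decomposition $M(X)\cong\sum_j\lambda(j)\,M_2\otimes\bT^{j\otimes}$ together with Theorem 6.1, and then pin down the ring structure by restricting to the complex points $\bar X=X(\bC)$, which is the split flag variety $G_2/B$.

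First I would fix the multiplicities. Since $H^*_{et}(M_2;\bZ/2)\cong\bZ/2[\rho]/(\rho^7)=\bZ/2\{1,\rho,\dots,\rho^6\}$ is exactly the second tensor factor of the known $gr H^*_{et}(X;\bZ/2)\cong S(t)/(b_1,b_2)\otimes\bZ/2\{1,\rho,\dots,\rho^6\}$, the factor $S(t)/(b_1,b_2)$ records the Tate twists, so $\lambda(j)=\dim_{\bZ/2}(S(t)/(b_1,b_2))_j$; this gives $\lambda(0)=1$, $\lambda(1)=\lambda(2)=2$, $\lambda(3)=1$, and $\sum_j\lambda(j)=6$. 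Applying $H^{2*}_{et}(-;\bZ_2(*))$ and inserting $H^{2*}_{et}(M_2;\bZ_2(*))\cong\bZ_2\{1,\pi\}\oplus\bZ/2\{\bar\rho_4\}$ (with generators in degrees $0,6,4$), each summand $M_2\otimes\bT^{j\otimes}$ contributes two torsion-free generators in degrees $2j,2j+6$ and one $2$-torsion generator in degree $2j+4$. Summing over $j$ yields a free part $\bZ_2^{12}$ and a torsion part $(\bZ/2)^6$, in precisely the degrees carried by $\bZ_2[t_1,t_2]/(2b_1,b_1^2,b_2^2,b_1b_2)$, whose torsion-free quotient (set $b_1=0$) is the complete intersection $\bZ_2[t_1,t_2]/(b_1,b_2^2)$ of rank $12$ and whose torsion is $b_1\cdot S(t)/(b_1,b_2)\cong(\bZ/2)^6$.

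For the ring structure, the Chern classes $t_1,t_2\in CH^1(X)$ are algebraic and generate: they generate modulo $2$ (where $CH^*(X)/2\cong S(t)/(2,b_1^2,b_2^2,b_1b_2)$), and every torsion class is $b_1$ times a polynomial in the $t_i$, hence again a polynomial in $t_1,t_2$. This gives a surjection $\bZ_2[t_1,t_2]\twoheadrightarrow H^{2*}_{et}(X;\bZ_2(*))$, and I must place the four relations in its kernel. The relations $2b_1=0$ and $b_2^2=0$ I would read off from $\bar X=G_2/B$, whose Chow groups are torsion-free and which rationally obeys the Borel presentation $CH^*(G_2/B)\otimes\bQ\cong\bQ[t_1,t_2]/(e_2,e_3^2)$, where $e_2,e_3$ are the symmetric functions of $t_1,t_2,-(t_1+t_2)$ generating the positive-degree $W(G_2)$-invariants. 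Reducing modulo $e_2=-b_1$ one computes $e_3=t_2^3=b_2$, so the degree-$6$ invariant relation is exactly $e_3^2=b_2^2=t_2^6=0$; as $CH^6(G_2/B)$ is torsion-free this is an integral identity, giving $res(b_2^2)=\bar t_2^6=0$, while $res(b_1)=-\bar e_2=0$. Since $res$ is injective on the torsion-free summands (on the Rost motive $res(c_0)=2y\neq0$), the class $b_1$ lies in $\ker res$, hence is torsion—of order $2$, as it is the class $c_1$ of the bottom $M_2$—so $2b_1=0$; and $b_2^2$, lying in the torsion-free group $CH^6(X)\otimes\bZ_2\cong\bZ_2$ and killed by $res$, must vanish. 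The remaining $b_1^2$ and $b_1b_2$ are then $2$-torsion with vanishing mod $2$ reductions $\rho^8=\rho^{10}=0$, and since reduction is injective on the torsion subgroup, they vanish too. As the quotient by the four relations has exactly the additive structure found above, the surjection is a degreewise isomorphism of finitely generated $\bZ_2$-modules, and all generators being algebraic, the cycle map is onto.

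The main obstacle I expect is the multiplicative bookkeeping of the third paragraph, and in particular the clean separation of the two fundamental $W(G_2)$-invariants: the degree-$2$ invariant $e_2=-b_1$, which in the twisted form $X$ does not vanish but becomes $2$-torsion ($2b_1=0$, with $b_1=\rho^4$ mod $2$), against the degree-$6$ invariant $e_3^2=b_2^2$, which remains zero. Keeping straight that $b_1$ (degree $4$) is $2$-torsion while $b_2$ (degree $6$) is torsion-free, even though both reduce mod $2$ to the Bockstein powers $\rho^4,\rho^6$, is the crux; by contrast the additive computation through the motivic decomposition is essentially routine.
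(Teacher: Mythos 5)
Your proposal is correct, and it takes essentially the same route as the paper --- though it is worth noting that the paper supplies no written proof of this theorem at all: the displayed material of \S 9 (the decomposition $M(X)\cong \sum_j \lambda(j)M_2\otimes \bT^{j\otimes}$, the mod $2$ computation $grH^*_{et}(X;\bZ/2)\cong S(t)/(b_1,b_2)\otimes \bZ/2\{1,\rho,\dots,\rho^6\}$ with $b_1=\rho^4$, $b_2=\rho^6$ quoted from [Ya3], and Theorem 6.1 in the case $n=2$, i.e.\ $H^{2*}_{et}(M_2;\bZ_2(*))\cong \bZ_2\{1,\pi\}\oplus \bZ/2\{\bar\rho_4\}$) is the entire intended argument, with the additive and multiplicative assembly left implicit. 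Your first two paragraphs carry out exactly that assembly, with the correct multiplicities $\lambda(j)=\dim (S(t)/(b_1,b_2))_j = 1,2,2,1$ and the resulting $\bZ_2^{12}\oplus(\bZ/2)^6$ in the right degrees. What you add beyond the paper is the explicit verification of the four relations, and this part is both necessary and correctly executed: $2b_1=0$ from $res(b_1)=-\bar e_2=0$ in the torsion-free cohomology of $\bar X=G_2/B$ together with injectivity of $res$ on the free summands (via $res(c_0)=2y$ on each copy of $M_2$); $b_2^2=0$ from the rational Borel presentation $\bQ[t_1,t_2]/(e_2,e_3^2)$, which is legitimate since the fundamental invariant degrees of $W(G_2)$ are $2$ and $6$ and $e_3^2$ is a valid choice of the degree-$6$ generator modulo $(e_2)$ (and your identification $e_3\equiv b_2$ is, as you use it, only mod $2$ and mod $b_1$); and $b_1^2=b_1b_2=0$ from injectivity of mod $2$ reduction on the elementary $2$-torsion subgroup, which holds because $\ker(r)=2H$ in each degree. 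Your closing step --- Nakayama to upgrade mod $2$ generation by $t_1,t_2$ to integral surjectivity of $\bZ_2[t_1,t_2]\to H^{2*}_{et}(X;\bZ_2(*))$, then the Hopfian property of finitely generated $\bZ_2$-modules to convert a degreewise surjection between abstractly isomorphic modules into an isomorphism --- is sound, and it also delivers the final assertion that all classes are algebraic, since the generators $t_i$ are first Chern classes and the torsion is the ideal $(b_1)$.
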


\end{document}